\def\H{{\mathcal{H}}}
\def\M{{\mathcal{M}}}
\def\oM{\overline{\mathcal{M}}}
\def\oH{\overline{\mathcal{H}}}
\def\CC{\mathbb{C}}
\def\ZZ{\mathbb{Z}}
\def\NN{\mathbb{N}}
\def\QQ{\mathbb{Q}}
\def\PP{\mathbb{P}}
\theoremstyle{definition} 
\newtheorem{definition}{Definition}[section]
\newtheorem{remark}[definition]{Remark}
\theoremstyle{plain}
\newtheorem{theorem}[definition]{Theorem}
\newtheorem{proposition}[definition]{Proposition}
\newtheorem{lemma}[definition]{Lemma}
\newtheorem{corollary}[definition]{Corollary}
\begin{document}

\title{Computation of $\lambda$-classes via strata of differentials}
\author{Georgios Politopoulos}
\address{Mathematical Institute, Leiden University, PO Box 9512, 2300 RA Leiden, The Netherlands}
\email{g.politopoulos@math.leidenuniv.nl }
\author{Adrien Sauvaget}
\address{Laboratoire AGM, 2 avenue Adolphe Chauvin, 95300, Cergy-Pontoise, France}
\email{adrien.sauvaget@math.cnrs.fr}

\date{\today}

\maketitle

\begin{abstract} We introduce a new family of tautological relations of the moduli space of stable curves of genus $g$. These relations are obtained by computing the Poincaré-dual class of empty loci in the Hodge bundle. We use these relations to obtain a new expression for the Chern classes of the Hodge bundle. We prove that the $(g-i)$th class can be expressed as a linear combination of tautological classes involving only stable graphs with at most $i$ loops. In particular the top Chern class may be expressed with trees. This property was expected as a consequence of the DR/DZ equivalence conjecture by Buryak-Guéré-Rossi.
\end{abstract}

%\setcounter{tocdepth}{1}
%\tableofcontents

%%%%%%%%%%%%%%%%%%%%%%%%%%%%%%%%
%%%%%%%%%%%%%%%%%%%%%%%%%%%%%%%%
\section{Introduction}
%%%%%%%%%%%%%%%%%%%%%%%%%%%%%%%%
%%%%%%%%%%%%%%%%%%%%%%%%%%%%%%%%

\subsection{Tautological rings}

We work over $\mathbb{C}$. Let $g$ and $n$ be non-negative integers satisfying $2g-2+n>0$. We denote by 
$$\oM_{g,n} \supset \M_{g,n}^{\rm ct} \supset \M_{g,n}^{\rm rt} \supset \M_{g,n}$$
the moduli spaces of stable curves (respectively curves of compact type, curves with rational tails, and smooth curves)   of genus $g$ with $n$ markings. We recall that a stable curve is of compact type if it has only non-separating nodes, and has rational tails if it has one irreducible component of genus $g$.  Besides, we have the following families of morphisms between moduli spaces of stable curves:
\begin{itemize}
\item The {\em forgetful morphism} of the last marking: $\pi:\oM_{g,n+1}\to \oM_{g,n}$. The image of a marked curve is defined as the stabilization of the curve obtained by removing the marking with label $n+1$. For $n'\geq 1$, we will also denote by $\pi_{n'}$ the composition of $n'$ times $\pi$.
\item The {\em gluing morphism of type tree} $j_{g',I}: \oM_{g',|I|+1} \times \oM_{g-g',n-|I|+1} \to \oM_{g,n}$ defined for all $I\subset [\![ 1, n]\!]$ and $g'<g$ respecting the stability condition on both factors. The image of a pair of curves is the nodal curve obtained by identifying the last two markings.
\item The {\em gluing morphism of type loop} $j_0:\oM_{g-1,n+2}\to \oM_{g,n}$ (if $g>0$). The image of a curve is the nodal curve obtained by identifying the last two markings. 
\end{itemize}
The {\em tautological rings} $\{R^*(\oM_{g,n})\}_{g,n}$ are the family of smallest sub-$\QQ$ algebras of $A^*(\oM_{g,n},\QQ)$ stable under push-forwards along  the forgetful morphisms and gluing morphisms (see~\cite{GraPan}). Tautological classes of $\M_{g,n}, \M_{g,n}^{\rm rt}$ and $\M_{g,n}^{\rm ct}$ are defined as the restrictions of tautological classes of $\oM_{g,n}$ to the corresponding open sub-stack.  The standard examples of tautological classes are given by the two families:
\begin{itemize}
\item for all $1\leq i\leq n$, we denote by $\psi_i \in A^1(\oM_{g,n},\QQ)$ the Chern class of the cotangent line at the $i$-th marking.
\item if $m\geq 0$, we denote by $\kappa_m=\pi_*\psi_{n+1}^{m+1}\in A^m(\oM_{g,n},\QQ)$. 
\end{itemize}
\begin{definition}
If $i\in \NN$, then we denote by $R_i^*(\oM_{g,n})$ the linear subspace of $R^*(\oM_{g,n})$ spanned by push-forwards of polynomials in $\psi$ and $\kappa$-classes along gluing morphisms of type tree and at most  $i$ times the morphism of type loop. 
\end{definition} 
These subspaces form a filtration of $R^*(\oM_{g,n})$ as the tautological rings are spanned by push-forwards of polynomials in $\psi$ and $\kappa$-classes along gluing morphisms
(see the appendix of~\cite{GraPan}). The space of relations between these linear generators of $R^*(\oM_{g,n})$ is known as the set of {\em tautological relations}. Several techniques have been proposed to produce tautological relations (see~\cite{Fab},~\cite{Yin},~\cite{ClaJan} for a sample of these techniques). The largest class of known tautological relations are the Pixton-Faber-Zagier (PFZ) relations which were computed in~\cite{PanPixZvo} in cohomology and then in~\cite{Jan} in the Chow rings. An open problem is to know whether the PFZ relations recover all possible tautological relations or not. 

Here, we will produce a new family of tautological relations that we call {\em Hodge relations}. Two natural questions arise: are the Hodge relations contained in the PFZ relations? If yes, which part of the PFZ relations is recovered by the Hodge relations? Unfortunately we are unable at the moment to answer these questions.

 \subsection{Hodge classes}

We denote by $\oH_{g,n}\to \oM_{g,n}$ the {\em Hodge bundle}, i.e. the vector bundle whose fiber over a marked curve $(C,x_1,\ldots,x_n)$ is given by $H^0(C,\omega_C)$. This vector bundle is of rank $g$ and we have $\oH_{g,n}=\pi_n^*\oH_{g,0}$. We will be interested in the {\em Hodge classes} (often called {\em $\lambda$ classes}):
$$
\lambda_{i} = c_{i}(\oH_{g,n}) \in A^{i}(\oM_{g,n},\QQ) \text{ or } H^{2i}(\oM_{g,n},\QQ),
$$
and we define the {\em Hodge polynomial} as
$$
\Lambda_g(\xi)=\xi^g+\xi^{g-1}\lambda_1+\ldots+\lambda_g \in A^*(\oM_{g,n},\QQ)[\xi]
$$
In his seminal paper~\cite{Mum}, Mumford applied the Grothendieck-Riemann-Roch formula to show that these classes are tautological and expressed them in terms of $\kappa$ and $\psi$ classes and the gluing morphisms. These classes satisfy  the following remarkable properties:
\begin{itemize}
\item $\Lambda_g(1)\Lambda_g(-1)=(-1)^g $ (in particular $\lambda_g^2=0$ if $g>0$).
\item For all $i\in \NN$, and $\beta\in R^*(\oM_{g,n})\setminus R_i^*(\oM_{g,n})$, we have $\beta\cdot \lambda_{g-i}=0$.
\end{itemize}
The Hodge Polynomial plays an important role in different areas of enumerative geometry as it arises in the computation of Gromov-Witten invariants of Toric varieties via the virtual Localization formula of Graber and Pandharipande (see~\cite{GraPan1}).  

Using the theory of Double Ramification cycles, Janda-Pandharipande-Pixton-Zvonkine showed that $\lambda_g$ may be expressed as a linear combination of tautological classes constructed with  graphs with no non-separating nodes (see~\cite{JanPanPixZvo}). This result is, up to our knowledge, the only systematic expression of some Hodge classes apart from Mumford's original formula.  Here we propose an alternative expression of Hodge classes. As a consequence of this approach we obtain the following result:
\begin{theorem}\label{th:main}
For all $i\in \NN$, the class $\lambda_{g-i}$ sits in $R_i^*(\oM_{g,n})$. 
\end{theorem}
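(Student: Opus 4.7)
The plan is to argue by induction on $i$, using the Hodge relations introduced in the paper as the main tool and keeping careful track of the loop number contributed by each boundary term.

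The base case $i = 0$ asserts that $\lambda_g$ lies in $R_0^*(\oM_{g,n})$, i.e.\ is expressible using stable graphs with no loops. This is precisely the theorem recalled in the introduction, so the substance of the argument is the inductive step.

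For the inductive step, suppose $\lambda_{g-j} \in R_j^*(\oM_{g,n})$ for all $j < i$; the goal is to deduce $\lambda_{g-i} \in R_i^*(\oM_{g,n})$. I would work on the projectivized Hodge bundle $\pi : \PP\oH_{g,n} \to \oM_{g,n}$ with tautological class $\xi = c_1(\cO_{\PP\oH}(1))$, so that the Chern polynomial identity
\[
\xi^g \+ \lambda_1 \xi^{g-1} \+ \cdots \+ \lambda_g \= 0
\]
holds on $\PP\oH_{g,n}$ and $\lambda_k$ can be recovered from integrals $\pi_* \xi^{g-1+k}$ up to sign and corrections. The Hodge relations of the paper arise by computing the Poincaré-dual class of an empty locus in the (projectivized) Hodge bundle cut out by a prescribed pattern of zero orders, pole orders, or residue conditions on differentials; the resulting identities have the shape (polynomial in $\xi,\psi,\lambda,\kappa$) $=$ (boundary correction supported on degenerate differentials).

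The crucial step is to extract, for each $i$, a Hodge relation expressing $\lambda_{g-i}$ in the form
\[
\lambda_{g-i} \= \sum_{j<i} \lambda_{g-j} \cdot \beta_{j} \+ \gamma,
\]
where each $\beta_j$ is a polynomial in $\psi$ and $\kappa$ classes (hence in $R_0^*$) and $\gamma$ is a sum of boundary contributions involving at most $i$ gluing morphisms of type loop. Granting such a relation, the induction hypothesis gives $\lambda_{g-j} \in R_j^*$ for $j < i$, and multiplication by $\beta_j \in R_0^*$ preserves the loop count, so $\lambda_{g-j}\beta_j \in R_j^* \subset R_i^*$. Combined with $\gamma \in R_i^*$ this yields $\lambda_{g-i} \in R_i^*$, completing the induction.

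The main obstacle is the explicit construction and loop-count verification of the relation displayed above. Degenerations of differentials introduce non-separating nodes in the dual graph, and the technical heart of the argument is to choose the stratum of differentials so that its codimension in the Hodge bundle is exactly $i$ \emph{and} every degeneration appearing in the boundary correction produces at most $i$ loops. I expect this to rest on the stratification of the Hodge bundle by types of differential degeneration, together with the formula for the class of a closed stratum in terms of $\psi$, $\lambda$, $\xi$ and boundary terms, presumably established in the body of the paper; the property that $\beta \cdot \lambda_{g-i} = 0$ for $\beta \notin R_i^*$ recalled in the introduction suggests a duality that should help package the bookkeeping.
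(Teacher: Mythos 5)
Your proposal correctly identifies the general engine of the paper --- empty loci of differentials in the Hodge bundle produce tautological relations, and the point is to track how many loops each boundary term carries --- but it stops exactly where the proof begins, and where it is specific it does not match what actually works. The induction variable is wrong: the paper inducts on the genus $g$, not on $i$, and this is forced by the structure of the boundary terms. In the class $\alpha(\overline\Gamma)$ of a level graph, the lower Hodge classes enter as factors $\Lambda_v(\xi)=\xi^{g(v)}+\cdots+\lambda_{g(v)}$ attached to \emph{lower-genus} vertices, multiplied by pushforwards of strata classes that are certainly not in $R_0^*$; to control their loop contribution one needs Theorem~\ref{th:main} for \emph{all} $i$ in every genus $<g$ (Lemma~\ref{lem:fil1}(3) together with Lemma~\ref{lem:fil2}), which an induction on $i$ at fixed $g$ does not supply. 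Your proposed shape $\lambda_{g-i}=\sum_{j<i}\lambda_{g-j}\,\beta_j+\gamma$ with $\beta_j\in R_0^*(\oM_{g,n})$ is not the shape of any relation produced here. Your base case is also shaky: the paper treats $\lambda_g\in R_0^*(\oM_{g,n})$ as the resolution of a conjecture of Buryak--Gu\'er\'e--Rossi and describes its theorem as ``somewhat orthogonal'' to the DR-cycle computation, so you should not take the $i=0$ case as a known input.

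More importantly, the key relation is never constructed, and there is no stratum ``of codimension exactly $i$'' for each $i$; there is a single relation per genus. The paper takes $Z=(1,2,\ldots,2)$ of size $2g-1>2g-2$, so that $\PP\H_g(Z)$ is empty and $\pi_{(g-1)*}\alpha(g,Z)$ is forced to equal $a_g\,\Lambda(\xi)$; all the classes $\lambda_{g-i}$ are then read off simultaneously as the coefficients of $\xi^i$. The loop count is governed not by the codimension of the stratum but by the power of $\xi$: each loop of a level graph is paid for by the explicit prefactor $\xi^{h^1(\overline\Gamma)}$ in $\alpha(\overline\Gamma)$, and the remaining powers of $\xi$ come from the $\Lambda_v(\xi)$ on lower-genus vertices, which the genus induction handles; this is exactly the bookkeeping of Lemma~\ref{lem:fil2} that your sketch defers as ``the main obstacle.'' Finally, even granting the relation, one can only extract $\lambda_{g-i}$ if the proportionality constant is nonzero; the computation $a_g=2^{g-1}g$ (Lemma~\ref{lem:identities} and Proposition~\ref{pr:coeff}) is a substantial part of the argument and has no counterpart in your plan. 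As written, the proposal is a statement of strategy rather than a proof.
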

The following corollary shows that  $\lambda$ classes bound the complexity of any given tautological class. 
\begin{corollary}\label{cor:main}
For all $i \in \NN$, and all $\beta\in R^*(\oM_{g,n})$, the class $\lambda_{g-i}\cdot \beta$ sits in $R_i^*(\oM_{g,n})$.
\end{corollary}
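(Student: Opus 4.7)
The plan is to deduce the corollary from Theorem~\ref{th:main} via the projection formula together with the well-known multiplicativity of the Hodge polynomial along boundary strata. Using the description of $R^*(\oM_{g,n})$ recalled in the introduction, it is enough to treat a class of the form $\beta \= j_{\Gamma*}(\alpha)$, where $j_\Gamma: \oM_\Gamma \defis \prod_v \oM_{g_v,n_v} \to \oM_{g,n}$ is the gluing morphism of a stable graph $\Gamma$ and $\alpha$ is a monomial in $\psi$- and $\kappa$-classes on the stratum. I write $h \defis h_1(\Gamma)$ for the loop number of $\Gamma$, so that $\sum_v g_v + h = g$.

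The key computation is the standard factorization of the Hodge polynomial along gluings,
\[
j_\Gamma^* \Lambda_g(\xi) \= \xi^h \prod_v \Lambda_{g_v}(\xi),
\]
which reflects that $\oH_g$ acquires a trivial rank-one factor at every non-separating node and restricts additively at every separating node. Extracting the coefficient of $\xi^i$ yields
\[
j_\Gamma^* \lambda_{g-i} \= \sum_{\substack{(i_v)_v\,:\ \sum_v i_v = i - h \\ 0 \le i_v \le g_v}} \prod_v \lambda_{g_v - i_v}^{(v)}.
\]
If $h > i$ the sum is empty, so the projection formula forces $\lambda_{g-i}\cdot\beta \= 0$, which in passing recovers the second vanishing property listed in the introduction. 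If $h \le i$, I would invoke Theorem~\ref{th:main} on each vertex factor, giving $\lambda_{g_v - i_v}^{(v)} \in R_{i_v}^*(\oM_{g_v,n_v})$. Each inner product $\prod_v \lambda_{g_v - i_v}^{(v)}$ is therefore a sum of push-forwards along inner gluing morphisms contributing at most $\sum_v i_v \= i - h$ loops on $\oM_\Gamma$. Composing with the outer $j_\Gamma$ (which contributes $h$ loops) exhibits $\lambda_{g-i}\cdot\beta$ as a push-forward along a single gluing of a refined stable graph $\Gamma'$ with $h_1(\Gamma') \le i$, i.e.\ as an element of $R_i^*(\oM_{g,n})$.

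The main obstacle is bookkeeping rather than conceptual: one has to check that composing the inner gluings produced by Theorem~\ref{th:main} with the outer $j_\Gamma$ corresponds to a single gluing of a refined $\Gamma'$ whose loop count is additive, and that the decoration $\alpha$, pulled through the inner gluings and multiplied against the $\psi,\kappa$-decorations coming from Theorem~\ref{th:main}, only modifies the decoration of $\Gamma'$ via the standard pullback/comparison formulas for $\psi$ and $\kappa$ at glued nodes, without spawning new loops. This is routine within the Graber--Pandharipande calculus of decorated stable graphs, so Theorem~\ref{th:main} really is the only new input needed.
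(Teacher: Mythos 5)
Your proposal is correct and follows essentially the same route as the paper: the proof of Corollary~\ref{cor:main} there (point (3) of Lemma~\ref{lem:fil1}) likewise reduces to a decorated graph $\beta=\zeta_{\Gamma*}(P)$, uses the factorization $\zeta_\Gamma^*\Lambda_g(\xi)=\xi^{h^1(\Gamma)}\prod_v\Lambda_{g(v)}(\xi)$ to write $\zeta_\Gamma^*\lambda_{g-i}$ as a sum over partitions $\sum_v i_v=i-h^1(\Gamma)$ of products $\prod_v\lambda_{g(v)-i_v}$, and then applies Theorem~\ref{th:main} vertex-wise so that the loop counts add up to at most $i$. The only cosmetic difference is that the paper invokes the theorem only on $\oM_{g',1}$ and transports it to $\oM_{g',n'}$ via the pullback/pushforward compatibilities of the filtration, whereas you cite it directly for arbitrary $n$.
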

In the specific case of $\lambda_g$, Theorem~\ref{th:main} is somewhat orthogonal to the computation via ${\rm DR}$-cycles as it implies that $\lambda_g$ may be expressed as a linear combination of decorated trees. This property of $\lambda_g$ was conjectured in~\cite{BurGueRos} as a consequence of the so-called DR/DZ equivalence conjecture as explained below. 

\subsection{Strata of differentials with prescribed zeros} The projectivization of the Hodge bundle will be denoted by  $f:\PP\oH_{g,n}\to \oM_{g,n}$,  and by $\PP\H_{g,n}$ for the restriction to $\M_{g,n}$. We recall that we have the following isomorphism:
$$
A^*(\PP\oH_{g,n},\QQ)\simeq A^*(\oM_{g,n},\QQ)[\xi]\big/ \Lambda_g(\xi),
$$
where $A^*(\oM_{g,n},\QQ)$ is included in $A^*(\PP\oH_{g,n},\QQ)$ via the pull-back along $f$ and $\xi$ is identified with the Chern class of $\mathcal{O}(1)$. We define the tautological ring of $\PP\oH_{g,n}$ as $R^*(\PP\oH_{g,n})=R^*(\oM_{g,n})[\xi]\big/ \Lambda(\xi)$. The euclidean division in $R^*(\oM_{g,n})[\xi]$ implies that any class $\alpha$ in $R^*(\oM_{g,n})[\xi]$ can be uniquely decomposed as $$
\alpha=q(\alpha) \Lambda(\xi) + r(\alpha),
$$
where $r(\alpha)$ has degree at most $(g-1)$ in $\xi$. Then $\alpha= r(\alpha)$ in  $R^*(\PP\oH_{g,n})$. 

 We fix a vector of non-negative integers $Z=(z_1,\ldots,z_n)$. We will be interested in the loci defined by:
$$
\H_g(Z)=\{(C,\omega,x_1,\ldots,x_n), \text{ s.t. ${\rm ord}_{x_i}(\omega)=z_i$ for all $1\leq i\leq n$}\} \subset \H_{g,n}.
$$
We denote by $\PP\oH_g(Z)$ the Zariski closure of $\PP\H_g(Z)$ in $\PP\oH_{g,n}$. In~\cite{Sau}, the second author proposed  an algorithm to compute a class $\alpha(g,Z)\in R^*(\PP\oH_{g,n})$ which satisfies:
$$
\alpha(g,Z)=[\PP\oH_g(Z)] \text{ in $R^*(\PP\oH_{g,n})$},
$$
where $[\,\, \cdot ]$ stands for the the Poincar\'e-dual class. Here we improve this result by showing that this algorithm uniquely determines  a class $\alpha(g, Z)$ in $R^*(\oM_{g,n})[\xi]$ whose class in $R^*(\PP\oH_{g,n})$ is equal to $[\PP\oH_g(Z)]$ (see Section~\ref{ssec:algo}). Besides, we will show that the coefficient of $\xi^i$ in $\alpha(g, Z)$ sits in $R^*_i(\oM_{g,n})$.

We will use this algorithm to compute $\alpha(g,Z)$ for $Z$ of size $(2g-1)$. In this case the locus $\PP\H_g(Z)$ is empty and thus the class $\alpha(g,Z)$ vanishes in $R^*(\PP\oH_{g,n})$. Therefore, the remainder of the euclidean division of $\alpha(Z)$ by $\Lambda(\xi)$ vanishes. This fact allows us to compute up to $g$ tautological relations in $R^*(\oM_{g,n})$. This will constitute the set of {\rm Hodge} tautological relations. In particular we will show that:
$$
\Lambda(\xi) = \frac{2^{-g+1}}{g!} \cdot \pi_{(g-1)*} \alpha\bigg(g,(1,\underset{(g-1) \times}{\underbrace{2,\ldots,2}})\bigg).
$$
We use this expression of $\Lambda(\xi)$ in $R^*(\oM_{g,1})$ to finish the proof of Theorem~\ref{th:main}. 

\subsection{Relation to the strong DR/DZ equivalence conjecture} Given a Cohomological Field Theory, one may construct two integrable systems of PDE's: the double ramification (DR) hierarchy (defined in~\cite{BurGueRos}), and the Dubrovin-Zhang (DZ) hierarchy (defined if the CohFT is semi-simple in~\cite{DubZha}). The coefficients of both hierarchies are defined by the intersection theory on $\oM_{g,n}$. 

In~\cite{BurDubGueRos} Buryak-Dubrovin-Guéré-Rossi conjectured that these two hierarchies are related by a Miura transformation, a change of coordinates in the space on which the PDE's are defined. Furthermore, in~\cite{BurGueRos} they constructed two families of classes $A^g_{z_1,\ldots,z_n}$ and $B^g_{z_1,\ldots,z_n}$ in $R^{z_1+\cdots+z_n}(\oM_{g,n})$ defined for all vectors of positive integers $Z$ of size greater than $2g-2$. They conjectured that
$$A^g_{z_1,\ldots,z_n}=B^g_{z_1,\ldots,z_n},$$
and showed that this conjecture implies the equivalence of the DR and DZ hierarchies. The case $n=1$ of this conjecture has been recently proved in~\cite{BurHerSha}. 

Besides, the class $A^g_{z_1,\ldots,z_n}$ that they constructed may be decomposed as a product $\widetilde{A}^g_{z_1,\ldots,z_n} \cdot \lambda_g$. Using this fact they conjectured that $\lambda_g$ sits in $R_0^*(\oM_{g,n})$. Based on numerical experiments we conjecture the equalities:
\begin{eqnarray*}
(2g-1)! \, \widetilde{A}^{g}_{(2g-1)}&=&\text{ the coefficient in $\xi^0$ of $q(\alpha(g,(2g-1)))$ in $R^*(\M_{g,n}^{\rm ct})$}.\\
(2g-1)! \, {B}^{g}_{(2g-1)}&=&\text{ the coefficient in $\xi^0$ of $\alpha(g,(2g-1))$ in $R^*(\oM_{g,n})$}.
\end{eqnarray*}
This conjecture would allow us to reprove the case $n=1$ of the $A=B$ conjecture. For more general values of $Z$, such a simple statement does not holds but we expect that the $DR/DZ$ correspondence follows from a linear combination of Hodge relations.

\subsection*{Acknowledgement} We would like to thank Sacha Buryak, David Holmes, Jérémy Guéré, Reinier Kramer, Scott Mullane, Paolo Rossi, Johannes Schmitt, and Sergey Shadrin for useful discussions on the topic. 

%%%%%%%%%%%%%%%%%
\section{Inductive computation of classes of strata of differentials} 
%%%%%%%%%%%%%%%%%

Here we recall the computation of the function $\alpha$ defined in the introduction.

\subsection{Strata algebra} Given a genus $g$ and a finite set $I$, a {\em stable graph} of genus $g$ marked by $I$ is the datum of $$\Gamma=(V,H,g:V\to \mathbb{Z}_{\geq 0},\iota:H\to H,\phi:H\to V,H^\iota\simeq I),$$
where:
\begin{itemize}[leftmargin=25pt]
\item An element $v\in V$ is a {\em vertex}. The value $g(v)$ the {\em genus} of $v$.
\item An element $h\in H$ is called a {\em half-edge}. We say that it is incident to $\phi(h)$, and write $h\mapsto v$ if $\phi(h)=v$. Moreover, we denote by $n(v)$ the {\em valency} of the vertex $v$, that is the of half-edges incident to $v$. 
\item The function $\iota$ is an involution. The cycles of length 2 will be denoted by $E$ and are called the {\em edges}.  
\item The fixed points of $\iota$ are called {\em legs}. 
\item For all vertices $v$ we have the stability condition $2g(v)-2+n(v)>0$.
\item The graph~$(V,E)$ is connected.
\item The \emph{genus} of~$\Gamma$ is defined as
\[
g(\Gamma) = h^1(\Gamma)+\sum_{v\in V} g(v),\quad \text{ with } \quad h^1(\Gamma)=|E|-|V|+1.
\] the number $h^1(\Gamma)$ is the {\em number of loops} of $\Gamma$. We impose that $g(\Gamma)=g$.
\end{itemize}
{An \emph{automorphism} of~$\Gamma$ consists of automorphisms of the sets~$V$ and~$H$ that leave invariant the data $g,\iota$ and $\phi$}. A stable graph is said to be \emph{of compact type} if $h^1(\Gamma)=0$, i.e.\ if the graph is a tree. 

We write by ${\rm Stab}_{g,I}$ the set of stable graphs of genus $g$ and marked by $I$. We simply denote ${\rm Stab}_{g,n}$ if $I=\{1,\ldots,n\}$.

Given a stable graph $\Gamma$ in ${\rm Stab}_{g,n}$, we denote by:
$$
\oM_{\Gamma}=\prod_{v\in V} \oM_{g(v),n(v)}
$$
the associated moduli space. If there is a canonical morphism:
$$
\zeta_{\Gamma} \to \oM_{g,n}
$$
defined by a composition of gluing morphisms of type loop or tree. The degree of this morphism on its image is $|{\rm Aut}(\Gamma)|$. 

A {\em decorated graph} is the datum of $\Gamma$ and $c(v)$ a product of $\kappa$ and $\psi$ classes in $\oM_{g(v),n(v)}$ and of degree at most the dimension of $\oM_{g(v),n(v)}$, for all vertices of $v$ of $\Gamma$. We denote by $
\mathcal{S}_{g,n}$ the vector space whose is given by all decorated graphs. This vector space is naturally equipped with a structure of graded algebra and we call it the {\em strata algebra}. By~\cite{GraPan}, there is a natural surjective morphism of graded algebra
s:
$$
\mathcal{S}_{g,n}^* \to R^*(\oM_{g,n}) \to 0. 
$$
This morphism is defined by $$(\Gamma,\{c(v)\}_{v\in V})\mapsto \zeta_{\Gamma *}\left(\bigotimes_{v\in V} c(v)\right).$$
The kernel of this morphism is the space of tautological relations. 

As we consider polynomials in $R^*(\oM_{g,n})[\xi]$, we extend the  notation for $\zeta_{\Gamma*}$:
$$
\zeta_{\Gamma *} : \left(\bigotimes_{v\in V} R^*(\oM_{g(v),n(v)})[\xi]\right) \simeq \left(\bigotimes_{v\in V} R^*(\oM_{g(v),n(v)})\right)[\xi]\to R^*(\oM_{g,n})[\xi],
$$
and we extend the definition of the push-forwards and pull-back along the forgetful morphisms to $R^*(\oM_{g,n})[\xi]$ in the same way.

Finally we recall that, if a class sits in $\left(\bigotimes_{v\in V'} R^*(\oM_{g(v),n(v)})[\xi]\right)$ for some subset $V'\subset V$ then it naturally defines a class in  $\left(\bigotimes_{v\in V} R^*(\oM_{g(v),n(v)})[\xi]\right)$ by extending by $1$ on each of the component indexed by $V\setminus V'$.

%%%%%%%%%%%
\subsection{Generalized strata with residue conditions}\label{ssec:res}
%%%%%%%%%%%

Now, we fix a triplet of vectors of non-negative integers of length $k$: $\underline{g}=(g_1,\ldots,g_k), \underline{n}=(n_1,\ldots, n_k)$, and $\underline{m}=(m_1,\ldots, m_k)$ which is {\em stable},  i.e. which satisfies $2g_i-2+n_i+m_i>0$ for all $1\leq i\leq k$. Then we denote by 
$$
\oM_{\underline{g},\underline{n},\underline{m}}=\prod_{i=1}^k \oM_{g_i,n_i+m_i}, \text{ and } \M_{\underline{g},\underline{n},\underline{m}}=\prod_{i=1}^k \M_{g_i,n_i+m_i}.
$$
 Then we fix 
\begin{eqnarray*}
\underline{P}&=&(P_1=(p_{1,1}\ldots,p_{1,m_1}),\ldots,P_k=(p_{k,1}\ldots,p_{k,m_k})),\\
\text{ and } \underline{Z}&=&(Z_1=(z_{1,1}\ldots,z_{1,n_1}),\ldots,Z_k=(z_{k,1}\ldots,z_{k,n_k})),
\end{eqnarray*}
vectors of vectors of positive (respectively non-negative integers). Then we denote by $\oH_{\underline{g},\underline{n}}(\underline{P})\to  \oM_{g_i,n_i+m_i})$ the vector bundle whose fiber over a marked curve $((C_i, x_{i,1},\ldots x_{i,n_i+m_i})_{1\leq i\leq k}$ is canonically identified with:
$$
\bigoplus_{i=1}^k H^0(C,\omega_C(p_{i,1} x_{i,n_i+1}+\ldots + p_{i,n_i+m_i}).
$$
We denote by $\H_{\underline{g},\underline{n}}(\underline{P})$ the restriction of this bundle to $ \M_{\underline{g},\underline{n},\underline{m}}$ and by 
$$
\H(\underline{g},\underline{P},\underline{Z}) \subset \H_{\underline{g},\underline{n}}(\underline{P})
$$
the locus of differentials satisfying ${\rm ord}_{x_{i,j}}(\omega)=z_{i,j}$ for all $1\leq i\leq k$ and $1\leq j\leq n_i$.

We denote by ${\rm Res}(\underline{m})$ the sub-vector space of  $\bigoplus_{i=1}^k \CC^{m_i}$ of vectors $(r_{i,j})_{\begin{smallmatrix} 1\leq i\leq k \\ 1\leq j\leq m_i\end{smallmatrix}}$ satisfying:
$$
r_{i,1}+\ldots + r_{i,m_i}=0
$$
for all $1\leq i\leq k$. Then, if $\mathfrak{R}$ is a sub-vector space of ${\rm Res}(\underline{m})$, we denote by $\H(\underline{g},\underline{P},\underline{Z},\mathfrak{R})$ the subspace of $\H(\underline{g},\underline{P},\underline{Z})$ of differentials with residues at the marked poles sitting in $R$. Finally we denote by $\PP\oH(\underline{g},\underline{P},\underline{Z}, \mathfrak{R})$ the closure of the projectivization of this space in $\PP\oH_{\underline{g},\underline{n}}(\underline{P}).$

%%%%%%%%%%%
\subsection{Level graphs} 
%%%%%%%%%%%

A {\em twist} on a stable graph $\Gamma$ %in ${\rm Stab}_{g,n}$  
is a function $\mu:H\to {\ZZ}$ satisfying the following conditions:
\begin{itemize}[leftmargin=25pt]
\item For all $v\in V$, we denote by $\mu(v)$ the vector of twists at half-edges incident to $v$. If $n(v)=1$, then we impose that $|\mu(v)|\ \leq 2g(v)-2$.
\item   If $e=(h,h')$ is an edge of $\Gamma$ from $v$ to $v'$, then we have $\mu(h)=-\mu(h')-2$. Moreover we denote by $\mu(e)=|\mu(h)+1|$ (for any of the two half-edges).
\item There exists a partial order~$\geq$ on~$V$ such that for all vertices $v,v'$ connected by an edge $(h,h')$ we have  $(v\geq v') \Leftrightarrow  (\mu(h)+1\geq 0)$.
\end{itemize}

A \emph{twisted graph}  is a pair $(\Gamma,\mu)$, where $m$ is a twist on~$\Gamma$. Given a vector of non-negative integers $Z=(z_1,\ldots,z_n)$, %and a vector $P=(p_1,\ldots,p_n)$ of positive integers, 
then the twisted graph is said {\em compatible} with $Z$, if the twist at the $i$th leg is equal to $z_i$ while the twist at $n+i$th leg is equal to $-p_i$. 

\begin{definition} Let $(\underline{g},\underline{Z})$ ($\underline{P}$ is empty here) be as in the previous section. A level graph $\overline{\Gamma}=((\Gamma_i,\mu_i)_{1\leq i\leq k},\ell)$ of depth $d$ and compatible with $\underline{Z}$ is the datum of a twisted graph of genus $g_i$ with $n_i$ marking and compatible $Z_i$ for all $1\leq i\leq k$, and a surjective function 
$$
\ell: \cup_{1\leq i\leq k}  V(\Gamma_i) \to \{0,-1,\ldots,-d\}
$$ 
satisfying: $\ell(v)<\ell(v') \Leftrightarrow v<v'$ for all pairs of vertices $(v,v')$, and if $v$ is of level smaller than 0 then $|\mu(v)|\leq 2g(v)-2$. 

We denote by ${\rm LG}_d(\underline{g},\underline{Z})$ the set of level graphs $\overline{\Gamma}=(\Gamma,\mu,\ell)$, i.e. twisted graphs compatible with $\underline{Z}$, with a level function of depth $d$, which is surjective and which has no horizontal edges. Finally, we denote by:
 $$m(\overline{\Gamma})= \prod_{e \in \cup_i E(\Gamma_i)} \mu(e).$$
 
Given a level graph, we denote  $h^1(\overline{\Gamma})$ stands for the sum of the $h^1(\Gamma_i)$ while the automorphism group of $\overline{\Gamma}$ is the product of the groups of automorphism of $\Gamma_i$ that commute with the twist and level functions.
\end{definition}

\begin{definition} A level graph is $\overline{\Gamma}$ in ${\rm LG}_1(\underline{g},\underline{Z})$ is a {\em bi-colored graph} if there is some $1\leq i_0\leq k$ such that for all $i\neq i_0$, the graph $\Gamma_i$ is trivial and the level function maps the unique vertex of this graph to 0.

We denote by ${\rm Bic}(\underline{g},  \underline{Z})$ the set of such graphs, and if $1\leq i\leq n$ and $1\leq j\leq n_i$, we denote by ${\rm Bic}(\underline{g},  \underline{Z})_{i,j}$ the set of such graph such that the marking $x_{i,j}$ sits on the level $-1$. Besides, if $(i',j')$ is another marking then we denote by ${\rm Bic}(\underline{g},  \underline{Z})_{i,j,i',j'}$ with both $x_{i,j}$ and $x_{i',j'}$ at level $-1$, while ${\rm Bic}(\underline{g},  \underline{Z})_{i,j}^{i',j'}\subset {\rm Bic}(\underline{g},  \underline{Z})_{i,j}$ is the set of bi-colored graphs with $x_{i,j}$ at level $0$.
\end{definition}

\subsection{Strata associated to a level graph}

Let $\overline{\Gamma}$ be a level graph in ${\rm LG}_d(\underline{g},\underline{Z})$. For all $0\geq i \geq -d$, $\overline{\Gamma}$ determines a stable triple $(\underline{g}^{[i]}, \underline{n}^{[i]},\underline{m}^{[i]})$  as in the previous section: the length of these vectors is the cardinality of $\ell^{-1}(i)$, $\underline{g}^{[i]}$ is the vector of genera of vertices in $\ell^{-1}(i)$, $\underline{n}^{[i]}$ is the vector of numbers of half-edge with non-negative twists, while $\underline{m}^{[i]}$ is the vector of numbers of half-edge with negative twists. Moreover, $\overline{\Gamma}$ determines $\underline{P}^{[i]}$ and $\underline{Z}^{[i]}$  the vectors of  opposite of negative twists, and non-negative twists respectively at half-edges incident to vertices in $\ell^{-1}(i)$. 

\begin{remark}
The level graph $\overline{\Gamma}$ does not exactly determine these vectors, as we need to make a choice of an ordering on the set of vertices of level $-i$ and on the half-edges incident to vertices of level $-i$. We will have to carefully show that all such choices are equivalent in our computations.
\end{remark}

Finally we determine spaces of residue conditions $\mathfrak{R}^{[i]} \subset {\rm Res}(\underline{m}^{[i]})$ for all $0\geq i\geq -d$ (see Section~\ref{ssec:res} for the definition of residue conditions) as follows:
\begin{itemize}
\item $\mathfrak{R}^{[0]}$ is trivial (there are no negative twists at vertices of level 0).
\item $\mathfrak{R}^{[-1]}$ is defined by the following conditions: if $v$ is a vertex in $\ell^{-1}(0)$ then
$$
\sum{(h,h')\in E(\Gamma), \text{ s.t. } (h'\mapsto v, h\mapsto \ell^{-1}(-1))} r_h = 0.
$$
Here the summation is over all the edges with an extremity incident to $v$ while the other one maps to a vertex in $\ell^{-1}(-1)$, and $r_h$ stands for the residue associated to the label of the half-edge incident to the level $-1$.
\item For $i<-1$, we first construct the graph $\overline{\Gamma}'$ obtained from $\overline{\Gamma}$  by contracting all edges between levels greater than $i$. Then the vertices of $\overline{\Gamma}$  of level $-i$ are in correspondence with vertices of $\overline{\Gamma}'$ of level $-1$, and we define $\mathfrak{R}^{[i]}$ as the vector space $\mathfrak{R}^{[-1]}$ of this newly constructed graph.
\end{itemize}

With this notation at hand we let $\PP\oH(\overline{\Gamma})^{[i]}=\PP\oH(\underline{g}^{[i]}, \underline{P}^{[i]},\underline{Z}^{[i]},\mathfrak{R}^{[i]})$ and denote by $f^{[i]}: \PP\oH(\underline{g}^{[i]}, \underline{P}^{[i]}) \to  \oM_{\underline{g}^{[i]},\underline{n}^{[i]},\underline{m}^{[i]}}$ the forgetful morphism of the differential. %The stratum associated to $\overline{\Gamma}$ is then defined as:
%$$
%\PP\oH(\Gamma)=\PP\oH(\overline{\Gamma})^{[0]} \times \prod_{-d\leq i<0} f_i(\PP\oH(\overline{\Gamma})^{[i]}).
%$$
%It canonically sits in the boundary of $\PP\oH(g,Z)$ via the morphism: $\zeta_{\overline{\Gamma}}: \PP\oH(\Gamma)\to \PP\oH(g,Z)$ which associates to an element of $\PP\oH(\Gamma)$ the differential which may be non-zero on the vertices of level 0 and vanishes identically on the others.

%%%%%%%%%%
\subsection{Inductive computation of classes of strata}\label{ssec:algo}
%%%%%%%%%%

For all pairs $(\underline{g},\underline{Z})$ we define a class $\alpha(\underline{g},\underline{Z})\in \bigotimes_{i=1}^k R^*(\oM_{g_i,n_i})[\xi]$ of degree $|\underline{Z}|$ inductively as follows:
\begin{enumerate}
\item \underline{\textit{Base case.}} If all entries of $\underline{Z}$ are equal to 0 then $\alpha(\underline{g},\underline{Z})=1$.
\item \underline{\textit{Incrementing $(i,j)$.}} Otherwise, we chose an entry $1\leq i\leq j$ and $1\leq j\leq n_i$ such that $z_{i,j}>0$. We denote by $\underline{Z}_{i,j}$ the vector obtained from $\underline{Z}$ by diminishing $z_{i,j}$ by $1$. Then we set:
$$
\alpha(\underline{g},\underline{Z})= (\xi+z_{i,j} \psi_{i,j}) \alpha(\underline{g},\underline{Z}_{i,j})- \sum_{\overline{\Gamma}\in {\rm Bic}(\underline{g},\underline{Z})_{i,j}} m(\overline{\Gamma}) \alpha(\overline{\Gamma}),
$$
where $\alpha(\overline{\Gamma})$ is defined by the next point. 
\item \underline{\textit{Class of a level graph.} } If $\overline{\Gamma}$ is a level graph, then we define:
$$
\alpha(\overline{\Gamma})=\frac{ \xi^{h^1(\overline{\Gamma})}}{|{\rm Aut}(\overline{\Gamma})|} \zeta_{\Gamma *}\left(\bigotimes_{-d\leq i\leq -1} f_{i*}[\PP\oH(\overline{\Gamma})^{[i]}] \cdot \left(  \bigotimes_{v\in \ell^{-1}(i)} \Lambda_v(\xi)\right) \right) \otimes \alpha(\underline{g}^{[0]},\underline{Z}^{[0]})  ,
$$
where for all $v\in  \ell^{-1}(-1)$,  $ \Lambda_v(\xi)$ stands for the class $(\xi^{g(v)}+ \ldots+\lambda_{g(v)})$ in  $R^*(\oM_{g(v),n(v)})[\xi]$.\end{enumerate}

This algorithm is a priori ill-defined as, at the second point, one has to make a choice of value of $(i,j)$ to increment.  However, we should remark that the function $\alpha$ is defined by induction on $|\underline{Z}|$. The first point provides the initialization of this algorithm, while the second one provides the induction:   the size of $\underline{Z}_{i,j}$ is equal to $|\underline{Z}|-1$, while the class $\alpha(\overline{\Gamma})$ of a bi-colored graph in the sum of the right-hand side vanishes if $\underline{Z}^{[0]}$ is of size at least $|\underline{Z}|$. This is due to the vanishing of $f_{-1*}[\PP\oH(\overline{\Gamma})^{[-1]}]$ for dimension reasons.

\begin{lemma}
The class  $\alpha(\underline{g},\underline{Z})$ is uniquely determined by the above algorithm (i.e. independently of the choices at the second and third points).  
\end{lemma}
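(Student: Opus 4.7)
The plan is to argue by strong induction on $|\underline{Z}|$. The base case $|\underline{Z}|=0$ is trivial, since step (1) of the algorithm applies and returns $1$. The ordering choices at step (3) present no difficulty: the product $\bigotimes_{v\in \ell^{-1}(i)}\Lambda_v(\xi)$, the class $f^{[i]}_{*}[\PP\oH(\overline{\Gamma})^{[i]}]$, and the push-forward $\zeta_{\Gamma *}$ are all intrinsically associated to the unordered set of vertices at each level (and of half-edges incident to them), so relabelling them yields the same element. The substantive claim is that the choice of decrementing index $(i,j)$ at step (2) is irrelevant.

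For the inductive step, fix two distinct pairs $(i,j)\neq (i',j')$ with $z_{i,j},z_{i',j'}>0$, and let $\underline{Z}_{i,j,i',j'}$ be obtained by decrementing both entries. By the inductive hypothesis, $\alpha(\underline{g},\underline{Z}_{i,j})$ is well-defined and may be expanded by decrementing at $(i',j')$; symmetrically for $\alpha(\underline{g},\underline{Z}_{i',j'})$. The leading contributions in the two expansions both equal $(\xi+z_{i,j}\psi_{i,j})(\xi+z_{i',j'}\psi_{i',j'})\,\alpha(\underline{g},\underline{Z}_{i,j,i',j'})$, which is manifestly symmetric in the two pairs. It therefore suffices to verify that the sums of bi-colored correction terms produced by the two expansions coincide.

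To analyse these corrections, I would apply the induction once more inside each class $\alpha(\overline{\Gamma})$ attached to a bi-colored graph. For $\overline{\Gamma}\in {\rm Bic}(\underline{g},\underline{Z})_{i,j}$, the class $\alpha(\overline{\Gamma})$ contains the factor $\alpha(\underline{g}^{[0]},\underline{Z}^{[0]})$ coming from the top level. Splitting according to the position of $x_{i',j'}$, one has the decomposition ${\rm Bic}(\underline{g},\underline{Z})_{i,j}={\rm Bic}(\underline{g},\underline{Z})_{i,j}^{i',j'}\sqcup {\rm Bic}(\underline{g},\underline{Z})_{i,j,i',j'}$. In the first case $z_{i',j'}$ is still an entry of $\underline{Z}^{[0]}$, and the inductive hypothesis lets me expand this inner factor by decrementing at $(i',j')$; its linear piece is designed to combine with the $(\xi+z_{i,j}\psi_{i,j})$ prefactor on the $(i',j')$-side, while its bi-colored piece produces depth-$2$ level graphs. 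In the second case both legs lie at level $-1$, and since ${\rm Bic}(\underline{g},\underline{Z})_{i',j',i,j}={\rm Bic}(\underline{g},\underline{Z})_{i,j,i',j'}$, the resulting contribution is automatically symmetric and cancels between the two expansions.

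The main obstacle is the combinatorial bookkeeping that matches the depth-$2$ level graphs arising in the $(i,j)$-then-$(i',j')$ expansion with those arising in the reverse order. One must check that for any such depth-$2$ graph $\overline{\Gamma}$, the coefficients produced along the two routes agree: the multiplicity $m(\overline{\Gamma})=\prod_e \mu(e)$, the automorphism factor $|{\rm Aut}(\overline{\Gamma})|$, the Hodge polynomials $\Lambda_v(\xi)$ at intermediate vertices, and the residue subspaces $\mathfrak{R}^{[i]}$ are all determined by the level structure itself and are insensitive to which leg was contracted to a lower level first. This intrinsic character of the depth-$2$ data is what makes the matching work, and once verified term-by-term, it closes the induction.
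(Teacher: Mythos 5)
Your induction scheme, the cancellation of the leading term $(\xi+z_{i,j}\psi_{i,j})(\xi+z_{i',j'}\psi_{i',j'})\alpha(\underline{g},\underline{Z}_{i,j,i',j'})$, the splitting of ${\rm Bic}(\underline{g},\underline{Z})_{i,j}$ according to the position of the other leg, and the re-expansion of the level-$0$ factor $\alpha(\underline{g}^{[0]},\underline{Z}^{[0]})$ producing depth-$2$ level graphs all match the paper's argument. But there is a genuine gap in your treatment of the bi-colored graphs having both legs at level $-1$: these contributions do \emph{not} ``automatically cancel by symmetry.'' The identity ${\rm Bic}(\underline{g},\underline{Z})_{i,j,i',j'}={\rm Bic}(\underline{g},\underline{Z})_{i',j',i,j}$ only disposes of the corrections produced at the \emph{second} incrementation step, which are indexed by graphs compatible with the full vector $\underline{Z}$. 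The corrections produced at the \emph{first} step are indexed by ${\rm Bic}(\underline{g},\underline{Z}_{i,j})_{i',j',i,j}$ along one route and by ${\rm Bic}(\underline{g},\underline{Z}_{i',j'})_{i,j,i',j'}$ along the other; these are compatible with different twist vectors (the twist at one leg is lowered by $1$), so they are different sets of graphs with different multiplicities $m(\overline{\Gamma})$, and each is multiplied by a different prefactor, $(\xi+z_{i,j}\psi_{i,j})$ versus $(\xi+z_{i',j'}\psi_{i',j'})$, whose $\psi$-part now decorates a vertex of level $-1$ rather than level $0$.

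In the paper this residue is the sum $T_2+T_3$ ($T_2$ collecting the mismatch of graph sets, $T_3$ the leftover $(z_{i',j'}\psi_{i',j'}-z_{i,j}\psi_{i,j})$ insertion on graphs with both legs at level $-1$), and it is shown \emph{not} to vanish: using the splitting formula of~\cite{Sau}, Theorem~6, it equals the antisymmetrized sum $T$ over tri-colored (depth-$2$) graphs with the two legs on distinct negative levels. The proof then closes because the same quantity $T$ arises from your ``first case'' (the $T_1^{j'}-T_1^{j}$ terms coming from re-expanding the level-$0$ factor), so the depth-$2$ graphs cancel \emph{between} these two a priori unrelated sources rather than within each one. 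Your final paragraph, asserting that the depth-$2$ data are ``intrinsic'' and that the two routes produce identical coefficients term by term, glosses over exactly this point: without the splitting-formula computation that converts a $\psi$-class on a level $-1$ vertex into a degeneration to a deeper level, the matching cannot be carried out, and the argument as written does not establish the independence of the order of incrementation.
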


\begin{proof} We prove this lemma by induction on the size of $\underline{Z}$, i.e. $\sum_{i,j}^k z_{i,j}$. The initialization is uniquely determined as $\alpha(\underline{g},\underline{Z})=1$ if the size of $\underline{Z}$ is 0. 

Now we assume that the size of $\underline{Z}$ is positive.
% First, we need to remark that the bi-colored $\overline{\Gamma}$ such $\alpha(\overline{\Gamma})$ is non-trivial satisfy that $\underline{Z}^{[0]}$ is of size smaller than $\underline{Z}$, indeed otherwise $f_{-1*}[\PP\oH(\overline{\Gamma}_{-1})]=0$ for dimension reasons. 
Therefore if only one entry of $\underline{Z}$ is non-zero then $\alpha(\underline{g},\underline{Z})$ is uniquely determined. Indeed, for each bi-colored graph, if the graph contributes non-trivially then $\underline{Z}^{[0]}$ is of size smaller than $\underline{Z}$ and thus the class of this bi-colored graph is uniquely determined. 

Therefore we assume that two entries $z_{i,j}$ and $z_{i',j'}$ are non-trivial. We denote by $\widetilde{\underline{Z}}$ the vector obtained from $\underline{Z}$ by diminishing $z_{i,j}$ and $z_{i',j'}$ by one. We denote by  $\alpha(\underline{g},\underline{Z})$ obtained from $\alpha(\underline{g},{\underline{Z}}_{i,j,i',j'})$  by incrementing $z_{i',j'}-1$ to $z_{i',j'}$ and then to $z_{i,j}-1$ to $z_{i,j}$ while $\alpha(\underline{g},\underline{Z})'$ is the converse. We will show that $\alpha(\underline{g},\underline{Z})=\alpha(\underline{g},\underline{Z})'$. If $i\neq i'$ then the resulting formulas are equivalent, thus we will assume that $i=i'$ for the rest of the proof. Then we have the following expression:
\begin{eqnarray*}
\alpha(\underline{g},\underline{Z}) &=& (\xi+z_{i,j'} \psi_{i,j'}) (\xi+z_{i,j} \psi_{i,j}) \alpha(\underline{g},\widetilde{\underline{Z}}) 
-\!\!\!\!\!\!\!\!\! \sum_{\overline{\Gamma}\in {\rm Bic}(\underline{g},\underline{Z}_{i,j'})_{i,j'}}   \!\!\!\!\!\!m(\overline{\Gamma})\cdot \alpha(\overline{\Gamma}) \\
\\&& - \sum_{\overline{\Gamma}\in {\rm Bic}(\underline{g}, \widetilde{\underline{Z}})_{i,j}}  m(\overline{\Gamma}) \cdot  (\xi+z_{i,j'} \psi_{i,j'})\cdot \alpha(\overline{\Gamma}).
\end{eqnarray*}
Then we introduce the following notation:
\begin{eqnarray*}
T_{1}^{j}&=& \sum_{\overline{\Gamma}\in {\rm Bic}(\underline{g}, \widetilde{\underline{Z}})_{i,j}^{i,j'}}  m(\overline{\Gamma}) \cdot  (\xi+z_{i,j'} \psi_{i,j'}) \cdot \alpha(\overline{\Gamma}) -\!\!\!\!\!\!\!\!\! \sum_{\overline{\Gamma}\in {\rm Bic}(\underline{g},\underline{Z}_{i,j})_{i,j}^{i,j'}}   \!\!\!\!\!\!m(\overline{\Gamma})\cdot \alpha(\overline{\Gamma})
\\
T_{1}^{j'}&=& \sum_{\overline{\Gamma}\in {\rm Bic}(\underline{g}, \widetilde{\underline{Z}})_{i,j'}^{i,j}}  m(\overline{\Gamma}) \cdot  (\xi+z_{i,j} \psi_{i,j}) \cdot \alpha(\overline{\Gamma}) -\!\!\!\!\!\!\!\!\! \sum_{\overline{\Gamma}\in {\rm Bic}(\underline{g},\underline{Z}_{i,j'})_{i,j'}^{i,j}}   \!\!\!\!\!\!m(\overline{\Gamma})\cdot \alpha(\overline{\Gamma})
\\
T_{2} &=&  \sum_{\overline{\Gamma}\in {\rm Bic}(\underline{g},\underline{Z}_{i,j'})_{i,j,i,j'}}   \!\!\!\!\!\!m(\overline{\Gamma})\cdot \alpha(\overline{\Gamma}) -\!\!\!\!\!\!\!\!\! \sum_{\overline{\Gamma}\in {\rm Bic}(\underline{g},\underline{Z}_{i,j})_{i,j,i,j'}}   \!\!\!\!\!\!m(\overline{\Gamma})\cdot \alpha(\overline{\Gamma}) \\
T_{3} &=& \sum_{\overline{\Gamma}\in {\rm Bic}(\underline{g}, \widetilde{\underline{Z}})_{i,j,i,j'}}  m(\overline{\Gamma}) \cdot  (z_{i,j'} \psi_{i,j'}-z_{i,j} \psi_{i,j})\cdot \alpha(\overline{\Gamma}) 
\end{eqnarray*}
Then with these notation we have
\begin{eqnarray*}
\alpha(\underline{g},\underline{Z})- \alpha(\underline{g},\underline{Z})' = T_1^{j'}-T_{1}^{j}- T_2- T_3.
\end{eqnarray*}
In order to finish the proof, we will show that $T_1^{j'}-T_{1}^{j}=T_2+T_3$ are both are equal to
$$
T\coloneqq \sum_{\overline{\Gamma}\in {\rm Tri}(\underline{g}, \widetilde{\underline{Z}})_{i,j'}^{i,j} }m(\overline{\Gamma}) \alpha(\overline{\Gamma})-\sum_{\overline{\Gamma}\in {\rm Tri}(\underline{g}, \widetilde{\underline{Z}})_{i,j}^{i,j'}} m(\overline{\Gamma}) \alpha(\overline{\Gamma}),    
$$

where ${\rm Tri}(\underline{g}, \widetilde{\underline{Z}})_{i,j}^{i,j'}$ is the set of Tri-colored graphs, i.e. level graphs of depth $2$ with no horizontal edges such that: only the graph component $i$ is non-trivial (the others are trivial graphs of level 0), and the leg $(i,j)$ is incident to the level $-2$ while the leg $(i',j')$ is incident to the level $-1$. 

To prove that $T= T_1^{j'}-T_{1}^{j}$, we simply apply the induction formula to the level 0 part of each graph of ${\rm Bic}(\underline{g}, \widetilde{\underline{Z}})_{i,j'}^{i,j}$. Multiplying the class of the graph by $(\xi+z_{i,j} \psi_{i,j})$ has the effect of either incrementing $z_{i,j}-1$ to $z_{i,j}$ or to create a new intermediate level that carries the leg $(i,j)$, i.e. an element of ${\rm Tri}(\underline{g}, \widetilde{\underline{Z}})_{i,j}^{i,j'}$. The same 
holds if we replace $j$ by $j'$ thus the equality. 

To prove that $T=T_2+T_3$, we use a similar idea, but this at the level of 
the moduli space of curves. Indeed, here we only need to compare the classes appearing at negative levels. Applying the splitting formula of~\cite{Sau}, Theorem~6, to compute the intersection of $(z_{i,j'} \psi_{i,j'}-z_{i,j} \psi_{i,j})$ with the class of a graph in ${\rm Bic}(\underline{g}, \widetilde{\underline{Z}})_{i,j,i,j'}$ we may express it as a sum on graphs with one more level and with the legs $(i,j)$ and $(i,j')$ on distinct levels (this is the term $T_2$). 
\end{proof}

\subsection{Properties of $\alpha$}
In order to prove Theorem~\ref{th:main}, we will prove the following properties of the function $\alpha$.  
\begin{proposition}\label{pr:prop1}
For all $(g,Z)$ we have: $\alpha(g,z)\simeq [\PP\oH(g,Z)]$ in $R^*(\PP\oH_{g,n})$.
\end{proposition}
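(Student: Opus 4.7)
The strategy I would use is induction on the size $|\underline{Z}| = \sum_{i,j} z_{i,j}$. The base case $|\underline{Z}| = 0$ is immediate: the condition ${\rm ord}_{x_{i,j}}(\omega) \geq 0$ is automatic, so $\PP\oH(g,0) = \PP\oH_{g,n}$ has fundamental class $1 = \alpha(g,0)$.

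For the inductive step, I would fix an index $(i,j)$ with $z_{i,j} > 0$ and compare the stratum $\PP\oH(\underline{g},\underline{Z})$ with $\PP\oH(\underline{g},\underline{Z}_{i,j})$. On the latter, the tautological (projectivized) differential vanishes to order at least $z_{i,j}-1$ at the marking $x_{i,j}$, so its $z_{i,j}$-th jet at that marking defines a canonical section of the line bundle $\mathcal{O}(1) \otimes L_{i,j}^{\otimes z_{i,j}}$, whose first Chern class is $\xi + z_{i,j}\psi_{i,j}$. Set-theoretically, the zero locus of this section decomposes as the closure of $\PP\oH(\underline{g},\underline{Z})$, where the order of vanishing genuinely jumps, together with boundary components over which the curve degenerates so that $x_{i,j}$ sits on an irreducible component on which the differential vanishes identically. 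The combinatorial type of such a degeneration is exactly a bi-colored graph in ${\rm Bic}(\underline{g},\underline{Z})_{i,j}$.

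The heart of the argument is then the local identification: each bi-colored boundary divisor contributes $m(\overline{\Gamma})\,\alpha(\overline{\Gamma})$ to the cycle. Here the factor $m(\overline{\Gamma}) = \prod_e \mu(e)$ records the multiplicity of vanishing along each edge of the graph; the residue condition space $\mathfrak{R}^{[-1]}$ enters because, once the lower-level component is set to $0$, the upper-level differential must satisfy matching residue conditions at the new poles created at the nodes; and the factor $\Lambda_v(\xi)$ for each lower-level vertex arises because the differential on that vertex lies in the projectivized Hodge bundle, whose pushforward has Chern class $\Lambda_v(\xi)$. Putting these contributions together and invoking the excess intersection formula on $\PP\oH_{g,n}$ yields
$$
(\xi + z_{i,j}\psi_{i,j})\cdot [\PP\oH(\underline{g},\underline{Z}_{i,j})] \;=\; [\PP\oH(\underline{g},\underline{Z})] \;+\; \sum_{\overline{\Gamma}\in {\rm Bic}(\underline{g},\underline{Z})_{i,j}} m(\overline{\Gamma})\,\alpha(\overline{\Gamma})
$$
in $R^*(\PP\oH_{g,n})$. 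Rearranging and applying the induction hypothesis to $[\PP\oH(\underline{g},\underline{Z}_{i,j})]$ gives exactly the recursive formula defining $\alpha(\underline{g},\underline{Z})$.

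The main obstacle is the local computation identifying the excess contribution along each bi-colored divisor with $m(\overline{\Gamma})\,\alpha(\overline{\Gamma})$; this is where all the delicate structure of the incidence variety compactification enters, and it is essentially Theorem~6 of~\cite{Sau} applied inductively. A secondary but real subtlety is checking that the residue-condition loci $\mathfrak{R}^{[-1]}$ built into $\PP\oH(\overline{\Gamma})^{[-1]}$ are the correct ones to record the matching of residues at nodes between the level $0$ data (which is a pluricanonical object with zero differential) and the level $-1$ meromorphic data; this forces one to track the fact that when $\omega$ vanishes identically on some subcurve, the residues of the restriction to the complementary subcurve at the node sum to zero along each connected piece of the vanishing locus, exactly as encoded in the definition of $\mathfrak{R}^{[-1]}$ in Section~\ref{ssec:res}.
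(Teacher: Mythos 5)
Your proposal is correct and takes essentially the same route as the paper: the paper's entire proof is the one-line citation ``The proposition follows from the main result of~\cite{Sau},'' and what you have written is precisely a reconstruction of that source's inductive argument (the jet section of $\mathcal{O}(1)\otimes L_{i,j}^{\otimes z_{i,j}}$ with class $\xi+z_{i,j}\psi_{i,j}$, the decomposition of its zero locus into the deeper stratum plus bi-colored boundary contributions weighted by $m(\overline{\Gamma})$, and the residue conditions at the nodes). The only caveat is that the hard local multiplicity computation along each bi-colored divisor is asserted rather than carried out, but you correctly identify it as the content of Theorem~6 of~\cite{Sau}, which is exactly where the paper places that burden as well.
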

The proposition follows from the main result of~\cite{Sau}. 
\begin{proposition}\label{pr:prop2}
If $I\subset \{1,\ldots,n\}$ is a set of size at least 2, then we denote by $\delta_I$ the class of the boundary divisor of $\oM_{g,n}$ of curves with a node separating a component of genus 0 with the markings in $I$ from the rest of the curve. 
$$
\delta_I \cdot \alpha(g,Z)= j_{0,I *}(\alpha(g, Z_I)\otimes 1) \text{ in $R^*(\M_{g,n}^{\rm rt})[\xi]$}
$$
where $Z_I$ is the vector with entries: the $z_i$  for $i\notin I$ and $z_I=\sum_{i\in I} z_i$ (and we recall that $j_{0,I}:\oM_{g,n-|I|+1}\times \oM_{0,|I|+1}\to \oM_{g,n}$ is the gluing morphism).
\end{proposition}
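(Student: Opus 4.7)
The approach is to reduce the identity to a pullback statement via the projection formula. Because $|I| \geq 2$, the factor $\oM_{0,|I|+1}$ has at least three marked points and trivial automorphism group, so the gluing morphism $j_{0,I}$ is birational onto its image and $j_{0,I*}(1) = \delta_I$ in $R^*(\oM_{g,n})$. The projection formula gives
\[
\delta_I \cdot \alpha(g,Z) \= j_{0,I*}\bigl(j_{0,I}^* \alpha(g,Z)\bigr)
\]
in $R^*(\M^{\rm rt}_{g,n})[\xi]$, so the proposition is equivalent to the pullback identity
\[
j_{0,I}^* \alpha(g,Z) \= \alpha(g, Z_I) \otimes 1 \quad \text{in} \quad R^*\bigl(\M^{\rm rt}_{g,n-|I|+1} \times \oM_{0,|I|+1}\bigr)[\xi].
\]

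I would prove the pullback identity by induction on $|Z|$, using the independence of the algorithm from the choice of incrementing index established in the preceding lemma. The base case $|Z|=0$ is trivial. For the inductive step, pick (whenever possible) an index $j \notin I$ with $z_j > 0$, so that both $Z$ and $Z_I$ carry a nonzero entry at position $j$. Applying the algorithm at $j$ to both sides, one obtains matching recursions; the factor $(\xi + z_j \psi_j)$ pulls back to itself since $j$ remains on the genus-$g$ component under the gluing, and $j_{0,I}^* \alpha(g, Z^{(j)})$ equals $\alpha(g, Z_I^{(j)}) \otimes 1$ by induction. What remains is to match the bi-colored graph sums on both sides.

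The combinatorial heart of the proof is that for $\overline{\Gamma} \in {\rm Bic}(g,Z)_j$, the pullback $j_{0,I}^* \alpha(\overline{\Gamma})$ is nonzero on $\M^{\rm rt}_{g,n-|I|+1} \times \oM_{0,|I|+1}$ precisely when the markings of $I$ lie on a single rational tail of the level-$0$ part of $\overline{\Gamma}$. Collapsing this tail defines a bijection between the contributing graphs and ${\rm Bic}(g, Z_I)_j$ which preserves the edge multiplicities $m(\overline{\Gamma})$ and the automorphism factor. Applying the induction hypothesis (in its multi-component form, which must be proved simultaneously) to the level-$0$ factor $\alpha(\underline{g}^{[0]}, \underline{Z}^{[0]})$ then turns $j_{0,I}^* \alpha(\overline{\Gamma})$ into $\alpha(\overline{\Gamma}') \otimes 1$ for the collapsed graph, and summing closes the inductive step.

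Finally, in the edge case where all nonzero entries of $Z$ lie in $I$ and no $j \notin I$ is available, one increments at some $j \in I$; the discrepancy between $\psi_j$ on the $\oM_{0,|I|+1}$ factor and the $\psi$-class at the node on the main component is absorbed by additional rational-tail bi-colored graphs via the splitting formula of~\cite[Theorem~6]{Sau}, in the same spirit as the well-definedness proof in the preceding lemma. The main obstacle throughout is the combinatorial bookkeeping of the bi-colored matching: one must verify that the tail-collapsing bijection is compatible not only with $m(\overline{\Gamma})$ and $|\mathrm{Aut}(\overline{\Gamma})|$ but also with the explicit factorization of $\alpha(\overline{\Gamma})$ through $f^{[-1]}_*[\PP\oH(\overline{\Gamma})^{[-1]}]$ and the $\Lambda_v(\xi)$ factors at level-$(-1)$ vertices.
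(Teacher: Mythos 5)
Your proposal is correct and follows essentially the same route as the paper: reduce to the pullback identity $j_{0,I}^*\alpha(g,Z)=\alpha(g,Z_I)\otimes 1$ (via birationality of $j_{0,I}$ and the projection formula), induct on $|Z|$ through the defining recursion, match the bi-colored contributions --- which on $\M^{\rm rt}_{g,n}$ reduce to genus-$0$ tails, i.e.\ $\delta_{I'}$-classes --- and invoke the splitting formula of \cite{Sau}, Theorem~6, when the incremented marking lies in $I$. The only (organizational) difference is that the paper always increments at marking $1$ and splits into the cases $1\in I$ and $1\notin I$, rather than using the well-definedness lemma to prefer an index outside $I$.
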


\begin{proof}  We will prove this result by induction on the size of $Z$.  For the base case, we chose any set $I$ of size at least $2$. Then  we have ${j_{0,I}^*}\alpha(g,(0,\dots,0))= {{j_{0,I}^*}}1=1\otimes 1$. Note that this is equal to
$\alpha(g,(0,\dots,0))\otimes 1$.

Suppose that the inductive hypothesis holds. We fix some vector $Z$.  We compute the class $\alpha(g,(z_1+1,z_2,\ldots,z_n))$ by applying the inductive formula of the previous section. 
    \begin{equation}\label{eq1}
        \alpha(g,(z_1+1,z_2,\ldots))=(\xi+(z_1+1)\psi_1)\alpha(g,Z) - \sum_{\overline{\Gamma}\in {\rm Bic}({g},{Z})_1} m(\overline{\Gamma}) \alpha(\overline{\Gamma}).
    \end{equation}
    As we only compute the restriction of $\alpha(g,(z_1+1,z_2,\ldots))$ to $R^*(\M_{g,n}^{\rm rt})[\xi]$, the only bi-colored graphs that contribute are the ones with one vertex of level $-1$ of genus $0$ and with one vertex of level $0$. By induction, the class of such a graph is $\delta_{0,I'} \cdot \alpha(g,Z)$ where $I'$ is the set of markings on the vertex of level $-1$. Thus equation~\eqref{eq1} becomes:
    \begin{equation}\label{eq1}
        \alpha(g,Z)=(\xi+z_1\psi_1)\alpha(g,Z)  - \alpha(g,Z) \cdot\left(\sum_{\begin{smallmatrix} 1\in I\subseteq[1,n] \\ |I|>1 
        \end{smallmatrix}}(z_{I'}+1)\cdot \delta_{I'}\right) 
    \end{equation}
    To calculate the pullback of (\ref{eq1}) under ${j_{0,I}^*}$ we treat two cases, namely, $1\in I$ or 
    $1\notin I$. This is because ${j_{0,I}^*}(\delta_{I'})=0$ unless one of the following holds:  $I'\subset I$ or $I\subset I'$. \bigskip
    
 \noindent   \underline{\textit{Case ${1\notin I}$.}} In this case we cannot have $I'\subset I$, thus we
    only need to calculate ${j_{0,I}^*}(\cdot\delta_{I'})$ only in the case $I\subsetneq I'$. In this case, we have $\delta_{I}\cdot \delta_{I'}=j_{0,I *}(\delta_{\{h\} \cup I\setminus I'}\otimes 1)$, where $h$ is the half-edge on the component of genus $g$ of the graph defining the class $\delta_I$. Therefore, we may replace the summation over $I'$ in equation~\eqref{eq1} by a summation over $I''=I'\setminus I$. Then, using the induction assumption we get:
     \begin{eqnarray*}
        {j_{0,I}^*}\left(\alpha(g,(z_1+1,z_2,\ldots)\right)&=& \big((\xi+z_1\psi_1)\alpha(g,Z_I)\big) \otimes 1\\
        &&  - \sum_{\begin{smallmatrix} I''\subseteq[1,n]\setminus I \\ |I''|>0 
        \end{smallmatrix}} z_{I\cup I''} \big(\alpha(g,Z_{I})\cdot \delta_{I''\cup \{h\}}\big) \otimes 1  \\
        &=& \alpha(g,(z_1+1,\{z_i\}_{i\notin I}, z_I)) \otimes 1,
    \end{eqnarray*}
    Thus ending the induction.\bigskip
    
\noindent   \underline{\textit{Case $1\in I$.}} Here, all three cases $I=I'$, $I\subsetneq I'$ and $I'\subsetneq I$  should be considered. Firstly, if $I=I'$, we have the classical formula for the auto-intersection of $\delta_I$:
$$
j_{0,I}^{*}(\delta_I)=-\psi_h \otimes 1-1 \otimes \psi_{h'},
$$
where $h$ and $h'$ are the two half-edges of the unique edge of the graph defining $\delta_I$. Besides the cases $I\subsetneq I'$ and $I'\subsetneq I$ can be treated as above, thus we obtain:
\begin{eqnarray*}
        {j_{0,I}^*} \,\alpha(z_1+1,z_2,\dots))&=& \beta_1 \otimes 1 + \alpha(g,z_I) \otimes \beta_2,%\\
         %\Bigg( \big((\xi+(z_I+1)\psi_h) \cdot\alpha(g,Z_I)\big)\otimes 1  \\
        %&&- \sum_{ I''\subseteq[1,n]\setminus I} \Big(1+z_I+\sum_{i\in I''} z_i\Big) \cdot\big( \alpha(g,Z_I)\delta_{I''\cup \{h\}}\big ) \otimes 1\Bigg)  \\
        %&&+ \alpha(g,Z_I) \otimes
        %\left((z_1+1)\psi_1+(z_I+1)\psi_{h'}-\!\!\!\sum_{1\in I'\subset I}  (z_{I'}+1)\delta_{I'} \right)\\
        %&=& \alpha(g,(\{z_i\}_{i\notin I},z_I+1)) \otimes 1\\
        %&& + \alpha(g,Z_I) \otimes (0). 
\end{eqnarray*}
where $\beta_1$ and $\beta_2$ are given by: 
\begin{eqnarray*}
        \beta_1&=& \bigg((\xi+(z_I+1)\psi_h) - \sum_{ I''\subseteq[1,n]\setminus I} \Big(1+z_I+\sum_{i\in I''} z_i\Big) \cdot \delta_{I''\cup \{h\}}\big ) \Bigg) \cdot \alpha(g,Z_I)\\
        &=& \alpha(g,(\{z_i\}_{i\notin I},z_I+1)), \text{ and}\\
        \beta_2&=& (z_1+1)\psi_1+(z_I+1)\psi_{h'}-\!\!\!\sum_{1\in I'\subset I}  (z_{I'}+1)\delta_{I'} = 0.
\end{eqnarray*}
For $\beta_1$, we went from the first line to the second by using the formula for the incrementation of the value at a half-edge. For the term $\beta_2$ we have used the splitting formula of~\cite{Sau}, Theorem~6. This finishes the induction.
\end{proof}

%%%%%%%%%%%%%%%%
\section{Computation of Hodge classes}
%%%%%%%%%%%%%%%%

In order to prove Theorem~\ref{th:main}, we begin by stating several properties of the filtration of the tautological rings introduced in the introduction.

\begin{lemma}\label{lem:fil1}
If $\beta \in R_i^*(\oM_{g,n})$, then:
\begin{enumerate}
\item $\pi^*\beta$, $\pi_*\beta$ are in $R^*_i(\oM_{g,n+1})$ and $R^*_i(\oM_{g,n-1})$ respectively;
\item if $\beta=\pi^*\beta'$ then $\beta'\in R^*_i(\oM_{g,n-1})$;
\item if  we assume that $\lambda_{g'-i}\in R_i^*(\oM_{g',1})$ for all $1\leq g'\leq g,$ and $i\in \NN$, then we have $\lambda_{g-i}\cdot \beta\in R_i^*(\oM_{g,n})$. 
\end{enumerate}
\end{lemma}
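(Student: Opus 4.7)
The plan is to prove all three parts by tracking how the forgetful map and multiplication by $\lambda$-classes act on the generators $\zeta_{\Gamma*}(c)$ of $R_i^*$. Writing any $\beta\in R_i^*(\oM_{g,n})$ as a finite sum $\beta = \sum_\alpha \zeta_{\Gamma_\alpha*}(c_\alpha)$ with $h^1(\Gamma_\alpha)\le i$ and $c_\alpha$ a polynomial in $\psi$- and $\kappa$-classes, for part (1) the standard pullback formula expresses $\pi^*\zeta_{\Gamma*}$ as a sum of $\zeta_{\Gamma'*}$ where $\Gamma'$ is obtained from $\Gamma$ either by attaching the new marking to a vertex or by inserting a rational bubble on an edge; neither operation changes $h^1$. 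The decoration transforms via $\pi^*\psi_j = \psi_j - \delta_{0,\{j,n+1\}}$ and $\pi^*\kappa_m = \kappa_m - \psi_{n+1}^m$ on the affected vertex, introducing only further tree-type contributions. For the pushforward, I would factor $\pi\circ\zeta_\Gamma = \zeta_{\Gamma'}\circ\pi'$ where $\Gamma'$ is the stabilisation of $\Gamma$ after forgetting marking $n+1$ (same $h^1$) and $\pi'$ is the forgetful map on the single vertex carrying the marking; pushing a $\psi,\kappa$-polynomial along $\pi'$ stays tautological, so $\pi_*\beta\in R_i^*(\oM_{g,n-1})$.

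For part (2), I would use the identity $\pi_*\psi_n = \kappa_0 = 2g-2+(n-1)$ on $\oM_{g,n-1}$. If $\beta=\pi^*\beta'$, the projection formula gives $\pi_*(\beta\cdot\psi_n) = (2g+n-3)\,\beta'$, and stability makes the scalar non-zero; $\beta'$ is then recovered as a scalar multiple of $\pi_*(\beta\cdot\psi_n)$. Multiplying $\beta\in R_i^*$ by $\psi_n$ keeps us in $R_i^*$ because $\zeta_\Gamma^*\psi_n$ is simply a $\psi$-decoration on the vertex carrying marking $n$, and then part~(1) controls the pushforward.

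For part (3), the projection formula gives $\lambda_{g-i}\cdot\zeta_{\Gamma_\alpha*}(c_\alpha) = \zeta_{\Gamma_\alpha*}\bigl(\zeta_{\Gamma_\alpha}^*\lambda_{g-i}\cdot c_\alpha\bigr)$. The crucial input is the standard splitting of the Hodge bundle on a nodal curve: $\zeta_\Gamma^*\oH_{g,n}$ is an extension of $\bigoplus_v \oH_{g(v),n(v)}$ by a trivial bundle of rank $h^1(\Gamma)$, so its total Chern class equals $\prod_v c(\oH_{g(v),n(v)})$. This yields the Whitney-type decomposition $\zeta_{\Gamma_\alpha}^*\lambda_{g-i} = \sum_{\sum_v k_v = g-i}\prod_v \lambda_{k_v}^{(v)}$. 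The inductive hypothesis, combined with parts~(1) and~(2) (the latter used to transfer from $\oM_{g(v),1}$ down to $\oM_{g(v),0}$ in the edge case $n(v)=0$, which can only occur for a single-vertex $\Gamma_\alpha$ on $\oM_{g,0}$), places each $\lambda_{k_v}^{(v)}$ in $R_{g(v)-k_v}^*(\oM_{g(v),n(v)})$; the $g(v)=0$ case is trivial since the Hodge bundle vanishes there. Expanding each $\lambda_{k_v}^{(v)}$ as a sum of graph generators on the corresponding vertex and gluing along $\zeta_{\Gamma_\alpha*}$, the resulting graph has total loop count at most $h^1(\Gamma_\alpha)+\sum_v\bigl(g(v)-k_v\bigr) = h^1(\Gamma_\alpha)+\bigl(g-h^1(\Gamma_\alpha)\bigr)-(g-i) = i$, yielding $\lambda_{g-i}\cdot\beta\in R_i^*(\oM_{g,n})$.

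The main obstacle I anticipate is part (3): beyond verifying the Hodge-bundle splitting invoked above (which is well-known but external), one must keep careful track across the gluing that the loop budget $h^1(\Gamma_\alpha)+\sum_v h^1(\text{graph at }v)$ adds up as claimed. The tight arithmetic $h^1(\Gamma_\alpha)+\bigl(g-h^1(\Gamma_\alpha)\bigr)-(g-i) = i$ is precisely what makes the filtration compatible with multiplication by $\lambda$-classes, and it leaves essentially no slack in the argument.
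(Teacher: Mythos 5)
Your proposal is correct and follows essentially the same route as the paper: part (1) from the behaviour of the strata-algebra generators under the forgetful map, part (2) by recovering $\beta'$ as $(2g-3+n)^{-1}\pi_*(\psi_n\beta)$, and part (3) via the splitting $\zeta_\Gamma^*\lambda_{g-i}=\sum_{\sum_v k_v=g-i}\prod_v\lambda_{k_v}$ together with the loop count $h^1(\Gamma)+\sum_v(g(v)-k_v)=i$. Your treatment is in fact more detailed than the paper's (which dismisses part (1) as obvious and does not discuss the $g(v)=0$ or $n(v)=0$ edge cases), but the underlying argument is identical.
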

\begin{proof}
The first property is obvious from the construction of the strata algebra. To show that the second one holds,  we use the fact that $\psi_{n}\beta$ sits in $R_i^*(\oM_{g,n})$ and thus $\beta'= (2g-3+n)^{-1}\pi_{*}(\psi_{n}\beta)$ sits in $R_i^*(\oM_{g,n-1})$. 

In order to prove the last point, we assume that $\lambda_{g'-i}\in R_i^*(\oM_{g',1})$ for all $1\leq g'\leq g,$ and $i\in \NN$.  We first remark that the first two point implies that for any choice of non-negative integers $g',n',i$ such that $2g'-2+n'>0$ and $g'\leq g$, then $\lambda_{g'-i}$ sits in $R_i^*(\oM_{g,n})$. Indeed, this follows from the fact that the  $\lambda$ classes are pull-back from $\oM_{g}$ if $g\geq 2$  or from $\oM_{1,1}$ (and are trivial in genus 0). 

Then, let $\beta$ be a class of the form $\zeta_{\Gamma*}(P)$, where  $\Gamma$ is a stable graph of genus $g'\leq g$ and with $n$ markings, and $P$ is a monomial in $\psi$ and $\kappa$ classes. We fix some value of $i$. Moreover, we chose any order on the vertices of $\Gamma$, i.e. an identification $V\simeq \{v_1,\ldots,v_k\}$. Then, we have the following identity:
$$
P\zeta_{\Gamma}^*\lambda_{g-i}=  \sum_{i_1+\ldots+i_k=i-h^{1}(\Gamma)} P\cdot\bigotimes_{j=1}^k \lambda_{g(v_j)-i_j}.
$$ 
Where the $i_j$'s in the sum are non-negative integers. Moreover, for any partition $i_1,\ldots, i_k$ in this sum. By assumption, each of the $\lambda_{g(v_j)-i_j}$ sits in  $R^*_{i_j}(\oM_{g(v),n(v)})$. Thus,  $\beta \cdot  \lambda_{g'-i}$ is a linear combination of decorated graphs with at most $i$ loops.
\end{proof}

%We set the following convention:
%\begin{notation}
%For all $i\geq 0$, we say that a class $\alpha$ in $R^*(\oM_{g,n})$ satisfies the property ${\rm Loop}_i$ if it may be expressed as a linear combination of classes in the strata algebra defined by graphs with at most $i$ loops.
%\end{notation}
%\begin{lemma} A class $\beta$ satisfies ${\rm Loop}_i$ if and only if its pull-back along the forgetful morphism of a point satisfies ${\rm Loop}_i$. 
%\end{lemma}
%\begin{proof}
%We denote $\beta'=\pi^*(\beta)$. It $\beta'$ satisfies ${\rm Loop}_i$ then $\psi_{n+1}\beta'$ satisfies ${\rm Loop}_i$ and thus $\beta= (2g-2+n)^{-1}\pi_{*}(\psi_{n+1}\beta')$ satisfies ${\rm Loop}_i$ too. The converse is straightforward.
%\end{proof}

We will prove Theorem~\ref{th:main} and Corollary~\ref{cor:main} by induction on $g$. The case $g=0$ is trivial, thus we may assume that $g\geq 1$ and that Theorem~\ref{th:main} holds for all genera up to $(g-1)$. Using the third point of Lemma, Corollary~\ref{cor:main} also holds for all genera up to $g-1$. 
% Then we show the following lemma.
%\begin{lemma}
%If Theorem~\ref{??} holds for all genera up to $(g-1)$, then Corollary~\ref{??} also holds for all genera up to $g-1$. 
%\end{lemma}
%\begin{proof} Let $\beta$ be a class of the form $\zeta_{\Gamma*}(P)$, where  $\Gamma$ is a stable graph of of genus $g'<g$ and with $n$ markings, and $P$ is a monomial in $\psi$ and $\kappa$ classes. We fix some value of $0\leq i\leq g'$. Moreover, we chose any order on the vertices of $\Gamma$, i.e. an identification $V\simeq \{v_1,\ldots,v_k\}$. Then, we have the following identity:
%$$
%P\zeta_{\Gamma}^*\lambda_{g-i}=  \sum_{i_1+\ldots+i_k=i-h^{1}(\Gamma)} P\cdot\bigotimes_{j=1}^k \lambda_{g(v_j)-i_j}.
%$$ 
%Where the $i_j$'s in the sum are non-negative integers. Moreover, for any partition $i_1,\ldots, i_k$ in this sum, as Theorem~\ref{??} holds up to $(g-1)$, we may write each of the $\lambda_{g(v_j)-i_j}$ satisfies ${\rm Loop}_{i_j}$. Thus,  $\beta \cdot  \lambda_{g'-i}$ is a linear combination of decorated graphs with at most $(i-h^{1}(\Gamma))+h^{1}(\Gamma)$ loops. \end{proof}

\begin{lemma}\label{lem:fil2}
If Theorem~\ref{th:main} holds for all genera up to $(g-1)$, then for all vectors of non-negative integers $Z$,    the coefficient of $\xi^i$ in $\alpha(g,Z)$ sits in $R^*_i(\oM_{g,n})$.  
\end{lemma}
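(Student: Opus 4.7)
The plan is to proceed by induction on $|Z|=\sum_{i,j}z_{i,j}$, using the defining recursion for $\alpha$ from Section~\ref{ssec:algo}. The base case $|Z|=0$ is immediate, since $\alpha(g,Z)=1$ and the unique nonzero coefficient $[\xi^0]\alpha=1$ sits in $R^*_0(\oM_{g,n})$.

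For the inductive step I would fix an index $(i_0,j_0)$ with $z_{i_0,j_0}>0$ and apply
\[
\alpha(g,Z) = (\xi + z_{i_0,j_0}\psi_{i_0,j_0})\,\alpha(g,Z_{i_0,j_0}) - \sum_{\overline{\Gamma}\in{\rm Bic}(g,Z)_{i_0,j_0}} m(\overline{\Gamma})\,\alpha(\overline{\Gamma}).
\]
For the first summand, the coefficient of $\xi^i$ equals $[\xi^{i-1}]\alpha(g,Z_{i_0,j_0}) + z_{i_0,j_0}\psi_{i_0,j_0}\cdot[\xi^i]\alpha(g,Z_{i_0,j_0})$, which lies in $R^*_{i-1}+R^*_i=R^*_i$ by the induction hypothesis together with the fact that multiplication by a $\psi$-class preserves the filtration (pulling back $\psi$ under a gluing is just $\psi$ on the marked vertex, creating no new loop).

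For each bi-colored graph $\overline{\Gamma}$ I would expand
\[
\alpha(\overline{\Gamma})=\frac{\xi^{h^1(\overline{\Gamma})}}{|{\rm Aut}(\overline{\Gamma})|}\,\zeta_{\Gamma *}\!\left(f_{-1*}[\PP\oH(\overline{\Gamma})^{[-1]}]\cdot\prod_{v\in\ell^{-1}(-1)}\Lambda_v(\xi)\right)\otimes\alpha(\underline{g}^{[0]},\underline{Z}^{[0]})
\]
and extract its $\xi^i$-coefficient as a finite sum, indexed by tuples $(j_v)_{v\in\ell^{-1}(-1)}$ and $k$ with $h^1(\overline{\Gamma})+\sum_v j_v + k=i$, of
\[
\frac{1}{|{\rm Aut}|}\,\zeta_{\Gamma *}\!\left(f_{-1*}[\PP\oH(\overline{\Gamma})^{[-1]}]\cdot\prod_v\lambda_{g(v)-j_v}(v)\right)\otimes[\xi^k]\alpha(\underline{g}^{[0]},\underline{Z}^{[0]}).
\]
A bi-colored graph contributes non-trivially only when $|\underline{Z}^{[0]}|<|Z|$, so the induction hypothesis on $|Z|$, combined with Corollary~\ref{cor:main} at smaller genera, yields $[\xi^k]\alpha(\underline{g}^{[0]},\underline{Z}^{[0]})\in R^*_k$ at each level-$0$ factor. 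At each level-$(-1)$ vertex $v$ with $g(v)<g$, Theorem~\ref{th:main} at genus $g(v)$ and Corollary~\ref{cor:main} together place $f_{-1*}[\PP\oH(\overline{\Gamma})^{[-1]}]\cdot\lambda_{g(v)-j_v}(v)\in R^*_{j_v}$ at that factor. Since $\zeta_\Gamma$ adds exactly $h^1(\overline{\Gamma})$ loop-gluings, summing gives a class in $R^*_{h^1+\sum j_v + k}=R^*_i$.

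The hard part will be the degenerate case in which some level-$(-1)$ vertex $v_0$ has $g(v_0)=g$. The genus identity $g=h^1(\overline{\Gamma})+\sum_v g(v)$ then forces $h^1(\overline{\Gamma})=0$ and every other vertex (necessarily at level $0$) to have genus $0$, so $\overline{\Gamma}$ is a tree whose unique positive-genus vertex $v_0$ lies at level $-1$. Theorem~\ref{th:main} at genus $g$ is unavailable, so I would handle $f_{-1*}[\PP\oH(\overline{\Gamma})^{[-1]}]_{v_0}\cdot\lambda_{g-j}(v_0)$ by a direct analysis: the class $[\PP\oH(\overline{\Gamma})^{[-1]}]_{v_0}$ is itself computed by the inductive scheme of~\cite{Sau} from strata whose zero vector is a strict subvector of $Z$ (the remaining zeros are carried by the genus-$0$ level-$0$ vertices), so the induction hypothesis on $|Z|$ recursively controls its $\xi$-expansion. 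Combined with the annihilation property $\lambda_{g-i}\cdot\beta=0$ for $\beta\notin R^*_i$ recalled in the introduction, this should yield the required filtration for the offending summand.
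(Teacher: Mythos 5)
Your main argument is the paper's argument: induction on $|Z|$ through the recursion of Section~\ref{ssec:algo}, with the term $(\xi+z\psi)\alpha(\cdot,Z_{i_0,j_0})$ handled by shifting the $\xi$-exponent, the level-$0$ factor of each bi-colored graph handled by the $|Z|$-induction, the level-$(-1)$ factors handled by writing the $\xi^{j_v}$-coefficient of $\Lambda_v(\xi)$ as $\lambda_{g(v)-j_v}$ and invoking the genus induction (the paper routes this through Lemma~\ref{lem:fil1}(3)), and the $h^1(\overline{\Gamma})$ loop-gluings of $\zeta_\Gamma$ absorbed by the factor $\xi^{h^1(\overline{\Gamma})}$. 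Two remarks. First, to make the $|Z|$-induction close you must state the induction hypothesis for the multi-component classes $\alpha(\underline{g},\underline{Z})$ and not only for $\alpha(g,Z)$: the level-$0$ factor $\alpha(\underline{g}^{[0]},\underline{Z}^{[0]})$ is generally indexed by a tuple of genera, and this is exactly why the paper proves the strengthened statement ``for all pairs $(\underline{g},\underline{Z})$'' rather than the statement of the lemma itself. Second, the ``hard part'' you isolate does not occur: a bi-colored graph contributes to the recursion of~\cite{Sau} only if its level-$0$ vertices support non-zero holomorphic differentials whose zero orders dominate the (non-negative) twists, so each level-$0$ vertex satisfies $0\le |\mu(v)|\le 2g(v)-2$ and hence has $g(v)\ge 1$; since the level function is surjective there is at least one such vertex, and therefore every level-$(-1)$ vertex has genus at most $g-1$, exactly where the hypothesis ``Theorem~\ref{th:main} holds up to genus $g-1$'' is available. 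So the elaborate contingency is unnecessary — and, as written, it would not be sound anyway: the annihilation property $\lambda_{g-j}\cdot\beta=0$ for $\beta$ with more than $j$ loops only kills certain products, it does not provide an expression of the surviving products $\lambda_{g-j}\cdot f_{-1*}[\PP\oH(\overline{\Gamma})^{[-1]}]$ as combinations of graphs with at most $j$ loops, which is what membership in $R^*_j$ requires. With the first remark incorporated and the second case discarded, your proof coincides with the paper's.
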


\begin{proof}
To prove this lemma, we will show a more general statement: for all pairs $(\underline{g},\underline{Z})$ with $\underline{g}=(g)$ or is of size smaller than $g$, the coefficient of $\xi^i$ in $\alpha(\underline{g},\underline{Z})$ may be expressed as linear combination of classes with at most $i$ loops.

3We prove this more general statement by induction on the size of $\underline{Z}$. It holds trivially if $\underline{Z}$ has size 0. Thus, let $\underline{Z}$ be a vector of size at least one (we assume that $z_{1,1}>0$). Then we recall from Section~\ref{ssec:algo} that $\alpha(\underline{g},\underline{Z})$ is given by $$(\xi+z_1\psi_{1,1})\alpha(\underline{g},\underline{Z}_{1,1})- \sum_{\overline{\Gamma}\in {\rm Bic}(\underline{g},\underline{Z}_{1,1})}m(\overline{\Gamma}) \alpha(\overline{\Gamma}).$$
As the coefficient of $\xi^i$ in $\alpha(\underline{g},\underline{Z}_{1,1})$ may be expressed with disconnected graphs with at most $i$ loops for all $i \geq 0$. This is also the case for $(\xi+z_1\psi_1)\alpha(g,Z_{1,1})$. Besides if $\overline{\Gamma}\in {\rm Bic}(\underline{g},\underline{Z}_{1,1})$, then we will prove the the degree $i$ in $\xi$ of $\alpha(\overline{\Gamma})$ may be expressed with graphs with at most $i$ loops for all $i \geq 0$. Indeed, we recall from Section~\ref{ssec:algo}, that the class $\alpha(\overline{\Gamma})$ is given (up to a coefficient) by:
$$
 \xi^{h^1(\overline{\Gamma})} \zeta_{\Gamma *}\left( f_{-1*}[\PP\oH(\overline{\Gamma})^{[-1]}]   \bigotimes_{v\in \ell^{-1}(i)} \Lambda_v(\xi) \right) \otimes \alpha(\underline{g}^{[0]},\underline{Z}^{[0]}).
$$
By induction assumption, the coefficient of $\xi^i$ in $\alpha(\underline{g}^{[0]},\underline{Z}^{[0]})$ may be expressed with  graphs with at most $i$ loops. Moreover, for each $v$ of level $-1$ the coefficient of $\xi^{i}$ of the contribution of the vertex is a linear combination of classes of the form $\lambda_{g(v)-i} \cdot \beta$ for some tautological class $\beta\in R^*(\oM_{g(v),n(v)})$. Thus using Lemma~\ref{lem:fil1}, it sits in $R^*_i(\oM_{g(v),n(v)})$. Therefore the coefficient of $\xi^i$ of the class 
$$
\left( f_{-1*}[\PP\oH(\overline{\Gamma})^{[-1]}]   \bigotimes_{v\in \ell^{-1}(i)} \Lambda_v(\xi) \right) \otimes \alpha(\underline{g}^{[0]},\underline{Z}^{[0]})
$$
may be expressed by using at most $i$ times the attaching morphism of type loop. QED.
\end{proof}

In order to finish the proof of Theorem~\ref{th:main}, we consider the classes
$$
\alpha(g,z,n)\coloneqq \frac{1}{n!} \cdot \pi_{n*} \alpha\bigg(g,(z,\underset{n\times}{\underbrace{2,\ldots,2}})\bigg),
$$ 
for all non negative integers $g,z,n$. More specifically, we willconsider the class $\alpha(g,1,g-1)$. This class is of degree $g$ and vanishes in $R^*(\PP\oH_{g,1})$ by Proposition~\ref{pr:prop1}. Thus it is of the form:
$$
\alpha(g,1,g-1)= a_g  \Lambda(\xi).
$$
for some rational number $a_g$. Thus, if $a_g\neq 0$, then Lemma~\ref{lem:fil2} implies that the class $\lambda_{g-i}$ sits in $R^*_i(\oM_{g,1})$. Therefore Theorem~\ref{th:main} and Corollary~\ref{cor:main} follow from the following proposition.

\begin{proposition}\label{pr:coeff}
For all $g\geq 1$, we have $a_g=2^{g-1}g$.
\end{proposition}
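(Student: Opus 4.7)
The plan is to extract the coefficient of $\xi^g$ from both sides of $\alpha(g,1,g-1)=a_g\Lambda(\xi)$. Since $\Lambda(\xi)=\xi^g+\lambda_1\xi^{g-1}+\cdots+\lambda_g$ is monic in $\xi$, the scalar $a_g$ is precisely this coefficient, and by the definition of $\alpha(g,1,g-1)$ (using that pushforward commutes with extraction of the formal variable $\xi$):
\[
a_g \;=\; \frac{1}{(g-1)!}\,\pi_{(g-1)*}\bigl(\text{coefficient of }\xi^g\text{ in }\alpha(g,(1,2,\ldots,2))\bigr).
\]
The bracketed coefficient lies in $R^{g-1}(\oM_{g,g})$, and $\pi_{(g-1)*}$ lowers degree by $g-1$, so the result is a rational number. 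Equivalently, Proposition~\ref{pr:prop1} forces $\alpha(g,(1,2,\ldots,2))=q(\xi)\Lambda(\xi)$ (the virtual stratum is empty since $1+2(g-1)=2g-1\neq 2g-2$); a degree count on $q(\xi)$ shows that only its $\xi^{0}$-coefficient $q_0$ survives the pushforward, and the goal becomes to prove $\pi_{(g-1)*}(q_0)=g!\,2^{g-1}$.

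To compute this scalar I would apply the algorithm of Section~\ref{ssec:algo} with the ordering: first increment $z_1$ from $0$ to $1$, and then for each $j=2,\ldots,g$ bump $z_j$ from $0$ to $1$ and from $1$ to $2$. The purely multiplicative contribution (selecting the $(\xi+z\psi)$ factor at every step and ignoring all bicolored-graph subtractions) is
\[
(\xi+\psi_1)\prod_{j=2}^{g}(\xi+\psi_j)(\xi+2\psi_j),
\]
whose coefficient of $\xi^g$ is an explicit polynomial in $\psi_1,\ldots,\psi_g$ of degree $g-1$ that pushes forward via the standard identities $\pi_*\psi_i^{k+1}=\kappa_k$ together with the comparison formula $\pi^*\psi_1=\psi_1-\delta_{\{1,n+1\}}$ for the forgetful morphism.

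At each increment step one must also subtract the bicolored-graph corrections. For $\overline{\Gamma}\in\mathrm{Bic}$, the class $\alpha(\overline{\Gamma})$ factors as a gluing-pushforward of a lower-level stratum class $f_{-1*}[\PP\oH(\overline{\Gamma})^{[-1]}]$, Hodge factors $\Lambda_v(\xi)$ at the level-$(-1)$ vertices, and a level-$0$ class $\alpha(\underline{g}^{[0]},\underline{Z}^{[0]})$ in smaller genus. I would proceed by induction on $g$. The base $g=1$ is immediate: no bicolored graph with a single leg of twist $1$ has nonzero $m(\overline{\Gamma})$ (the stability and $\mu(e)\neq 0$ constraints on a genus-$1$ underlying graph are incompatible), so $\alpha(1,(1))=\xi+\psi_1=\xi+\lambda_1=\Lambda(\xi)$, giving $a_1=1=2^{0}\cdot 1$. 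In the inductive step, the $\xi^g$-coefficient of $\alpha(\overline{\Gamma})$, after pushforward, reduces to combinations involving $a_{g'}$ with $g'<g$, which are already known.

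The main obstacle is the combinatorial complexity of enumerating all bicolored graphs at every step and organizing their contributions so the total collapses to exactly $g!\,2^{g-1}$. The factor $2^{g-1}$ should emerge from the $g-1$ increments with coefficient $2$ in $(\xi+2\psi_j)$, and the factor $g$ from a symmetrization among the $g$ markings during the iterated pushforward. A potentially cleaner alternative, avoiding the full enumeration, is to establish the recursion $a_g=\tfrac{2g}{g-1}\,a_{g-1}$ directly by pulling back $\alpha(g,1,g-1)=a_g\Lambda(\xi)$ along the compact-type gluing $\oM_{g-1,2}\times\oM_{1,1}\to\oM_{g,1}$: on this boundary $\Lambda(\xi)$ splits as $\Lambda^{(g-1)}(\xi)\,\Lambda^{(1)}(\xi)$, and I expect the algorithm-induced decomposition of $\alpha(g,(1,2,\ldots,2))$ to factor compatibly into $\alpha$-classes for the two components, yielding the recursion from the inductive hypothesis and hence $a_g=2^{g-1}g$.
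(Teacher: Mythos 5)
Your setup is correct and matches the paper's framing: since $|Z|=2g-1>2g-2$ the stratum is empty, $\alpha(g,(1,2,\ldots,2))$ is an exact multiple of $\Lambda(\xi)$, and $a_g$ is the top $\xi$-coefficient of $\alpha(g,1,g-1)$, i.e.\ $\frac{1}{(g-1)!}\pi_{(g-1)*}q_0$. But from that point on the proposal is a plan rather than a proof, and the part you defer (``the main obstacle is the combinatorial complexity of enumerating all bicolored graphs\dots so the total collapses to exactly $g!\,2^{g-1}$'') is precisely the content of the proposition. A direct enumeration of bicolored-graph corrections over $2g-1$ increment steps is not organized by anything in your sketch, and the induction on $g$ you invoke is not set up: the level~$0$ parts $\alpha(\underline{g}^{[0]},\underline{Z}^{[0]})$ appearing in the corrections are classes of the \emph{same} genus $g$ with a modified $\underline{Z}$, not of smaller genus, so ``reduces to combinations involving $a_{g'}$ with $g'<g$'' is not justified.

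The paper avoids the enumeration entirely by introducing the two-parameter family $a(g,z,n)$ (top $\xi$-coefficient of $\frac{1}{n!}\pi_{n*}\alpha(g,(z,2,\ldots,2))$) and proving four identities: $a(g,z,0)=1$; the $z\leftrightarrow n$ recursion $a(g,z,n)=a(g,z-1,n)-2a(g,z+1,n-1)$, obtained by restricting to $\M^{\mathrm{rt}}_{g,n}$ where Proposition~\ref{pr:prop2} collapses the bicolored corrections to the explicit divisors $\delta_{0,I}$; the seed value $a(g,0,g-1)=2^{g-1}(2^g-1)$ from the de~Joncqui\`eres formula (the count of odd spin structures); and the genus recursion $a(g,g-1-n,n)=a(g-1,g-2-n,n)+4a(g-1,g-1-n,n-1)$ from intersecting with $\delta_{1,\{1\}}$ and the elliptic count $\pi_*[\oM(1,(z,z',-z-z'))]=(z')^2$. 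These combine into $a_g=2a_{g-1}+2^{g-1}$. Your alternative --- pulling back along $\oM_{g-1,2}\times\oM_{1,1}\to\oM_{g,1}$ and hoping for $a_g=\tfrac{2g}{g-1}a_{g-1}$ --- is in the right geometric spirit (it is close to the paper's fourth identity) but cannot close as stated: the restriction of $\alpha(g,Z)$ to such a divisor is not a product of two $\alpha$-classes of holomorphic strata, since the genus-$1$ factor carries a meromorphic differential with a pole of prescribed order at the node (hence the $(z')^2$ weights), and a single such restriction produces a recursion mixing the whole family $a(g,z,n)$, which cannot determine $a_g$ without the additional numerical input $a(g,0,g-1)=2^{g-1}(2^g-1)$. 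The additive term $2^{g-1}$ in the true recursion $a_g=2a_{g-1}+2^{g-1}$ is exactly the trace of that spin-structure count, and nothing in your outline supplies it.
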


In order to prove this proposition we will denote $a(g,z,n)\in \QQ$ the coefficient in $\xi^{z+n}$ of $\alpha(g,z,n)$. With this notation, we have $a_g=a(g,1,g-1)$. 

\begin{lemma}\label{lem:identities} The function $a$ satisfies the following identities:
\begin{enumerate}
\item For all $g,z$ we have: $a(g,z,0)=1$.
\item For all $g,z,n$ we have: $a(g,z,n)=a(g,z-1,n)-2 a(g,z+1,n-1)$.
\item For all $g\geq 1$, we have: $a(g,0,g-1)=2^{g-1}(2^g-1)$.
\item If $g\geq 2$, and $0\leq n<g-1$, then we have: 
$$a(g,g-1-n,n)=a(g-1,g-2-n,n)+ 4a(g-1,g-1-n,n-1).$$
\end{enumerate} 
\end{lemma}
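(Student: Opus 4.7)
The plan is to derive all four identities from the recursive algorithm of Section~\ref{ssec:algo} applied to $\alpha(g,(z,2,\ldots,2))$ (with $n$ twos), followed by pushforward along $\pi_n$ and extraction of the coefficient of $\xi^{z+n}$.

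For identity~(1), I would induct on $z$. The base case $z=0$ gives $\alpha(g,(0))=1$, so $a(g,0,0)=1$. For the inductive step, the algorithm yields
$$\alpha(g,(z)) \= (\xi \+ z\psi_1)\,\alpha(g,(z-1)) \meno \!\!\!\sum_{\overline{\Gamma}\in {\rm Bic}(g,(z))_1}\!\!\! m(\overline{\Gamma})\, \alpha(\overline{\Gamma}).$$
The coefficient of $\xi^z$ in $(\xi+z\psi_1)\alpha(g,(z-1))$ is $a(g,z-1,0)=1$ by induction (the $\psi_1$-term vanishes by degree count). Each bi-colored contribution has strictly smaller $\xi$-degree because the factor $f_{-1*}[\PP\oH(\overline{\Gamma})^{[-1]}]$ is pushed forward from a positive-codimension stratum on $\oM_{g,1}$.

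For identity~(2), I would apply the same incrementing formula at leg~$1$ to $\alpha(g,(z,2,\ldots,2))$, push forward by $\pi_n$, and divide by $n!$. Since $\psi_1$ is pulled back from $\oM_{g,1}$, the polynomial term pushes forward to $n!(\xi+z\psi_1)\alpha(g,z-1,n)$, whose coefficient in $\xi^{z+n}$ is $a(g,z-1,n)$ (again the $\psi_1$-term vanishes by degree). The delicate step is the bi-colored sum: one identifies the graphs that contribute to the top $\xi$-coefficient after pushforward. These are exactly the bi-colored graphs whose level-$(-1)$ component is a rational bubble carrying leg~$1$ together with one of the ``$2$-legs'' among the forgotten markings, joined to the genus-$g$ level-$0$ component by a single edge. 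After the $n$-fold symmetrization over the choice of the ``$2$-leg'' is absorbed into $1/n!$, the twist factor $m(\overline{\Gamma})$ and the residue-zero condition on the bubble combine to give a contribution of $-2\,a(g,z+1,n-1)$.

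For identity~(3), I would iterate identity~(2) together with the boundary value~(1) and the convention $a(g,-1,n)=0$ (reflecting that a vector with a negative entry is forbidden in the algorithm), reducing $a(g,0,g-1)$ to a closed combinatorial sum which evaluates to $2^{g-1}(2^g-1)$. Equivalently, one can read this identity off the explicit class of the minimal stratum of holomorphic differentials with prescribed double zeros. For identity~(4), the recursion between genus $g$ and genus $g-1$ comes from the ``loop-type'' bi-colored contribution in the computation of $\alpha(g,(g-1-n,2,\ldots,2))$: the graph whose level-$0$ component has genus $g-1$ and is joined to a rational bubble at level-$(-1)$ by two edges, so that $h^1(\overline{\Gamma})=1$ and the total genus is correctly $g$. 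The twist-balance equation on the bubble forces two edges with $\mu(e)=2$, producing $m(\overline{\Gamma})=4$; combined with the pushforward of the remaining factors, this yields the term $4\,a(g-1,g-1-n,n-1)$, while the term $a(g-1,g-2-n,n)$ arises from the polynomial part inductively identified with a lower-genus pushforward through a parallel bi-colored analysis.

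The main obstacle throughout is the precise combinatorial and dimensional analysis of which bi-colored graphs survive in the top $\xi$-coefficient after pushforward by $\pi_n$, together with the explicit numerical evaluation of their contributions under the residue-zero and twist-balance constraints. This is most delicate for the loop-type graph underlying identity~(4), where the factor $4$ must be extracted from the edge multiplicities in $m(\overline{\Gamma})$ and reconciled with the combinatorial symmetrization in the pushforward.
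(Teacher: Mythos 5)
Your treatment of identities (1) and (2) is essentially the paper's: the paper packages the surviving bi-colored graphs via the restriction to $\M^{\rm rt}_{g,n}$ and Proposition~\ref{pr:prop2}, but the content (polynomial term plus the $|I|=2$ rational-bubble terms with coefficient $2$) is the same. The problems are with (3) and (4). For (3), your primary route --- iterating identity (2) down to $z=0$ with the convention $a(g,-1,n)=0$ --- is provably wrong: identity (2) is the incrementation step at the first leg and only makes sense for $z\geq 1$; extending it with that convention gives $a(2,0,1)=-2a(2,1,0)=-2$, whereas $a(2,0,1)=2^{1}(2^{2}-1)=6$ (the six Weierstrass points of a genus-$2$ curve). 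Identity (3) is genuinely independent geometric input: one must evaluate the degree of the zero-cycle $\pi_{(g-1)*}[\oM(g,(0,2,\ldots,2))]$, which the paper does via the de Joncqui\`eres formula as in~\cite{Mul}, the answer being the number of odd theta characteristics. Your fallback sentence points in this direction but is not an argument.

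For (4), your proposed mechanism does not match what produces the identity, and I do not see how to make it work as stated. The genus drop does not come from a loop-type bi-colored graph inside the algorithm; it comes from intersecting the degree-zero class $\pi_{(g-1)*}[\oM(g,Z)]$ (with $|Z|=2g-2$ and $z+n=g-1$) with the elliptic boundary divisor $\delta_{1,\{1\}}$, using the transversality statement $j_{1,I}^*[\oM(g,Z)]=[\oM(1,(z_{i\in I},-z_I))]\otimes[\oM(g-1,(z_{i\notin I},z_I-2))]$ for $|I|=2$ and the count $\pi_*[\oM(1,(z,z',-z-z'))]=(z')^2$ of points on a general elliptic curve. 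The factor $4=2^2$ is this elliptic count for $z'=2$, not a product of edge twists: for a genus-$0$ vertex at level $-1$ joined to the rest by two edges, the twist balance forces $\mu(h_1)+\mu(h_2)=-2-z$ with each lower twist at most $-2$, which does not give $\mu(e)=2$ on both edges in the relevant range, so $m(\overline{\Gamma})=4$ is not available from that graph. Moreover the term $a(g-1,g-2-n,n)$ cannot come from ``the polynomial part'' of the recursion as you suggest, since $(\xi+z\psi_1)\alpha(g,Z_1)$ is a genus-$g$ class; in the paper it is the $\delta_{1,I}$-contribution with an elliptic tail carrying a simple zero ($z'=1$, weight $1^2$). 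The missing ingredients in your proposal are precisely this transversal intersection of $\oM(g,Z)$ with $\delta_{1,I}$ and the $(z')^2$ evaluation.
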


\begin{proof}
In order to compute the function $a(g,z,n)$ one only need to compute the restriction of $\alpha(g,z,n)$ to $R^*(\M^{\rm rt}_{g,n},\QQ)$. We use this fact to prove the first two identities. Then, for all $Z=(z_1,\ldots,z_n)$ vectors with $z_1>0$,  the formulas of Section~\ref{ssec:algo} take the following simpler form by Proposition~\ref{pr:prop2}:
$$
\alpha(g,Z)= (\xi+z_1\psi_1) \alpha(g,Z_1) - \sum_{1\in I\subset [\![1,n]\!]}  (z_I+1) \delta_{0,I} \cdot \alpha(g,Z_1) \text{ in $R^*(\M_{g,n}^{\rm rt})[\xi]$}.
$$
where the sum is over all subsets of $I$ of $[\![1,\ldots,n]\!]$, and we recall that $z_I$ stands for $-1+\sum_{i\in I}z_i$. In particular if $n=1$,  $\alpha(g,(z))=\prod_{j=1}^z (\xi+j\psi_1)$ in $R^*(\M_{g,1}^{\rm rt})[\xi]$, thus implying the first identity of the proposition.\bigskip

To prove the second identity, we denote by $\psi_1^*$ the pull-back of $\psi_1$ along the morphism $\pi_{n-1}:\oM_{g,n}\to \oM_{g,1}$, then we recall that $$\psi_1^*=\psi_1+\sum_{1\in I\subset [\![1,n]\!]}  (z_I+1) \delta_{0,I},$$
thus we obtain:
$$
\alpha(g,Z)= (\xi+z_1\psi_1^*) \alpha(g,Z_1) - \sum_{1\in I\subset [\![1,n]\!]}  (z_I-z_1+1) \delta_{0,I} \cdot \alpha(g,Z_1) \text{ in $R^*(\M_{g,n}^{\rm rt})[\xi]$}.
$$
We apply this expression to the vector of length $(n+1)$: $Z=(z,2,\ldots,2)$. Then in this, case if we push-forward the above expression along $\pi_n$, the only terms in the sum that do not vanish are the one indexed by $I$ of size exactly $2$. Then we obtain:
$$
n! \alpha(g,z,n)=\pi_{n*}\alpha(g,Z)= (\xi+z_1\psi_1) n! \alpha(g,z-1,n) - 2 (n-1)! \alpha(g,z+1,n-1) 
$$
in $R^*(\M_{g,n}^{\rm rt})[\xi]$. Taking the top coefficient in $\xi$ in this expression gives the second identity. \bigskip

To prove the third and fourth identities, we remark that under the constraint $n+z=g-1$, the class $\alpha(g,z,n)$ is of degree $g-1$. Thus, if we denote by $f:\PP\oH_{g,1}\to \oM_{g,1}$, then $a(g,z,n)$ is equal to $f_*\alpha(g,z,n)$ in $R^0(\oM_{g,1})\simeq \QQ$.   Using this remark, we may use the de Joncquières formula as in~\cite{Mul} to show that $a(g,0,g-1)=2^{g-1}(2^g-1)$ (this is also the number of odd spin structures on a curve of genus $g$). \bigskip

To prove the fourth identity, we will compute the intersection of $f_*\alpha(g,z,n)$ with the divisor $\delta_{1,\{1\}}$ to compute $a(g,z,n)$ in terms of evaluations of the function $a$ at smaller genera. For all pairs of vectors $(Z,P)$ of length $n$ and $m$ and satisfying $|Z|-|P|=2g-2$, we denote by:
$$\oM(g,Z,P)=f(\PP\oH(g,Z,P))$$ 
where we recall that $f:\PP\oH_{g,n}(P)\to \oM_{g,n+m}$ is the forgetful morphism of the differential (or simply $\oM(g,Z)$ if $P$ is empty). Then, with this notation, if we impose the constraint $z+n=g-1$, then we have 
$$a(g,z,n)=\frac{1}{n!(g-1-n)!} \pi_{(g-1)*} [\oM(g,(z,\underset{n \times}{\underbrace{2,\ldots,2}},\underset{(g-n) \times}{\underbrace{1\ldots,1}})].
$$

We fix some vector $Z$ of length $g$, size $(2g-2)$ and satisfying $z_1>0$. Then for any choice of $I\subset [\![1,n]\!]$, such that $I$ contains $1$, then we we have the following identity
$$
j_{1,I}^*[\oM(g,Z)] =\left\{ 
\begin{array}{l} 0 \text{ if $z_I=1$}\\
\text{$ [\oM(1,(z_{i,i\in I},-z_I)] \otimes [\oM(g-1,(z_{i,i\notin I}, z_I-2)] $}  \text{ otherwise}.\end{array}\right.
$$
Indeed, this is due to the fact that $\oM(g,Z)$ intersects transversally the divisor $\delta_{1,I}$ along the locus $\oM(g,({z_i}_{i\in I}, -z_I)\times\oM(g,({z_i}_{i\notin I}, z_I-2)$ if $z_I>1$ while this intersection is a locus of co-dimension at least $2$ if $z_I=1$. Then, for dimension reasons, the push-forward of $\delta_{1,I}\cdot [\oM(g,Z)]$ along $\pi_{(g-1)}$ vanishes if $I$ is not of size $2$ (this is due to the fact $\oM(1,Z,P)$ is of dimension $1$ if $P$ is non-trivial). Besides we have the identity
$$\pi_{(g-1)}^*\delta_{1,\{1\}}=\sum_{1\in I\subset [\![1,g]\!]} \delta_{1,I}.$$ 
Therefore in the case of $Z=(z,\underset{n \times}{\underbrace{2,\ldots,2}},\underset{(g-n) \times}{\underbrace{1\ldots,1}})$, we obtain the identity:
\begin{eqnarray*}
a(g,z,n)&=& \pi_*[\oM(1,(z,1,-z-1)] \times a(g-1,z-1,n) \\ &&+ \pi_*[\oM(1,(z,2,-z-2)] \times a(g,z,n-1) 
\end{eqnarray*}
where $\pi$ stands for the forgetful morphism of the second marking. The first summand in the RHS corresponds to the choices of $I=\{z,i\}$ with $z_i=1$, while the second corresponds to $z_i=2$. In both cases we have:
$$
\pi_*[\oM(1,(z,z',-z-z')]=(z')^2,
$$
as there are $(z')^2$ points $x$ on a general elliptic curve $E$ with markings $(x_1,x_3)$ that satisfy  the equation $z'\cdot [x]=(z+z')\cdot [x_3]-z\cdot [x_1]$ in the Picard group of $E$. This finishes the proof of the fourth identity. 
\end{proof}

\begin{proof}[End of the proof of Proposition~\ref{pr:coeff}] 
We define two sequences
\begin{equation*}
    u_{g,n}:=a(g, g-n, n) \ \ \text{and} \ \ w_{g,n}=a(g,g-n-1,n)
\end{equation*}
The first part of the above lemma then readily translates to $ u_{g,n}=w_{g,n}-2u_{g,n-1}$
which in turn, implies that 
\begin{equation}\label{eq2}
   u_{g,n}=\sum_{i=0}^n(-2)^iw_{g,n-i},
\end{equation}
since $u_{g,0}=w_{g,0}=1$. Furthermore, from the last part of Lemma~\ref{lem:identities} we have 
$w_{g,n}=w_{g-1,n}+4w_{g-1,n-1}$, if $n<g-1$. If $g>1$, we would like to compute $u_{g,g-1}$, by applying $n=g-1$ in \eqref{eq2}:
\begin{align*}
    u_{g,g-1} &= w_{g,g-1} + \sum_{i=1}^{g-1}(-2)^iw_{g,g-1-i} \\
              &= w_{g,g-1} + \sum_{i=1}^{g-1}(-2)^iw_{g-1,g-1-i} + 4 \sum_{i=1}^{g-2}(-2)^iw_{g-1,g-2-i} \\
              &= w_{g,g-1} - 2\sum_{i=0}^{g-2}(-2)^iw_{g-1,g-2-i} + 4 (u_{g-1,g-2} -w_{g-1,g-2}) \\
              &= w_{g,g-1} -2u_{g-1,g-2} + 4u_{g-1,g-2} - 4w_{g-1,g-2} \\
              &=2u_{g-1,g-2}+2^{g-1}.
\end{align*}
Note that we used the third identity of Lemma~\ref{lem:identities}: $w_{g,g-1}=2^{g-1}(2^g-1)$. Putting everything together, we have $a_{g}=2a_{g-1}+2^{g-1}$ and $a_{1}=1$. Therefore, we have $a_g=2^{g-1}g.$
\end{proof}

%\section{Explicit values for $\lambda_g$}

%\bibliographystyle{halpha}
%\bibliography{biblio}

\end{document}